\documentclass{amsart}

\usepackage[english]{babel}
\usepackage{amsmath, amsfonts, amssymb, amsthm, faktor}

\usepackage[all]{xy}
\usepackage{tikz}
\usetikzlibrary{math}
\usetikzlibrary{patterns}
\usepackage{caption}
\usepackage{subfig}
\usepackage{float}

\usepackage{caption}
\usetikzlibrary{arrows,chains,matrix,positioning,scopes,decorations.pathreplacing,
decorations.pathmorphing,decorations.markings,arrows.meta}

\usepackage{aliascnt}
\usepackage[colorlinks, linktocpage, allcolors=black,breaklinks]{hyperref}

\usepackage{enumerate}

\usepackage{verbatim}
\usepackage[T1]{fontenc}
\usepackage{lmodern}
\usepackage{amsmath,amssymb,amsthm,mathtools}
\usepackage{booktabs,longtable,array,tabularx}
\usepackage{enumitem}
\usepackage{fancyvrb}
\usepackage{multicol}
\usepackage[margin=1in]{geometry}
\usepackage{microtype}
\allowdisplaybreaks

\theoremstyle{definition}
\newtheorem{theorem}{Theorem}[section]

\newtheorem{proposition}[theorem]{Proposition}
\newtheorem{corollary}[theorem]{Corollary}
\newtheorem{lemma}[theorem]{Lemma}
\newtheorem{question}[theorem]{Question}

\newtheorem*{acknowledgments}{Acknowledgments}
\newcommand{\N}{\mathbb N}
\newcommand{\Z}{\mathbb Z}

\newcommand{\PhiA}{\Phi}
\newcommand{\asdimB}{\operatorname{asdim}_{\mathrm B}}
\newcommand{\dist}{\operatorname{dist}}

\newcommand{\abs}[1]{\lvert #1\rvert}

\begin{document}
\title[Commuting Functions]
{Borel graphs generated by finitely many bounded-to-1 commuting Borel functions and hyperfinite equivalence relations}

\author[Ruijun Wang]{Ruijun Wang}
\address{School of Mathematical Sciences and School of Pre-university, Dalian Minzu University}
\email{wangruijun@dlnu.edu.cn}
\thanks{}

\subjclass[2020]{Primary 03E15}

\keywords{hyperfinite, commuting functions, Borel asymptotic dimension}

\date{}

\maketitle
\begin{abstract}
We show that a Borel graph generated by finitely many bounded-to-1 commuting functions is hyperfinite. This recovers a theorem in the forthcoming paper by Naryshkin, Shinko, Weilacher and Yu.
\end{abstract}

\maketitle

\section{Introduction}
In the seminal paper \cite{KST1999}, Kechris, Solecki and Todorcevic initiated the study of  descriptive combinatorics. In \cite{GJ2015}, Gao and Jackson developed the rectangular partition method for Schreier graphs of countable abelian groups actions, and they proved that the graph is hyperfinite. In \cite{SchneiderSeward2024}, Schneider and Seward extended this method and proved the countable Borel equivalence relation by action of locally nilpotent group is hyperfinite. In \cite{CJMST}, Conley, Jackson, Marks, Seward and Tucker-Drob also followed this idea and developed Borel asymptotic dimension of locally finite graphs, and they proved hyperfiniteness for polycyclic group actions. In \cite{BY}, Bernshteyn and Yu proved the hyperfiniteness of Borel graphs of polynomial growth using Borel asymptotic dimension.

In this paper, we study the hyperfiniteness of graphs generated by commuting functions. Let $f_i:X\to X,\ i\in I$ be a family of Borel functions and $f_i(f_j(x))=f_j(f_i(x))$ for all $i,j\in I$, and let $G$ be the Borel graph generated by the functions, $(x,y)$ is an edge iff $f_i(x)=y$ or $f_i(y)=x$ for some $i\in I$ and $x\neq y$. What is the complexity of $G$? This open question is very popular folklore and there are different versions of it, see \cite[Question 6.2]{GH} and \cite[Problem 44]{OpenProblems}. It is so popular that at least two different groups of people work on it. While their paper is still in preparation, in this paper we recover one of their theorems, Corollary \ref{main}.

\begin{theorem}[Naryshkin, Shinko, Weilacher, Yu, \cite{NSWY}]
    Let $G$ be the Borel graph generated by finitely many bounded-to-1 commuting functions. And let
    $$F=\{x:\{f_1^{a_1}f_2^{a_2}\cdots f_d^{a_d}(x):(a_1,\cdots,a_d)\in \mathbb N^d\}\text{ are pairwise distinct}\}$$
    be the free part. Then $G\upharpoonright F$ is of finite Borel asymptotic dimension and thus hyperfinite.
\end{theorem}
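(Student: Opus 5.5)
We plan to show that $G\upharpoonright F$ has finite Borel asymptotic dimension and then use the theorem of Conley, Jackson, Marks, Seward and Tucker-Drob \cite{CJMST}: a locally finite Borel graph of finite Borel asymptotic dimension is hyperfinite. Local finiteness holds because each $f_i$ is bounded-to-$1$, so every vertex has at most $d(1+N)$ neighbours, where $N$ bounds the fibre sizes. The model we aim for is the action of $\mathbb N^d$ on the free part. For $x\in F$ the map $a\mapsto f^a(x):=f_1^{a_1}\cdots f_d^{a_d}(x)$ is injective, so each forward orbit is a copy of the lattice $\mathbb N^d$. Two points $x,y$ lie in the same component of $G$ exactly when $f^a(x)=f^b(y)$ for some $a,b\in\mathbb N^d$, since commutativity lets us push every path forward.

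\textbf{Step 1 (coordinates on a component).} Fix a component $C\subseteq F$ and a base point $x_0$. For $y\in C$ choose $a,b$ with $f^a(y)=f^b(x_0)$ and define $\phi(y)=b-a\in\mathbb Z^d$. We would check that this is well defined. Commutativity together with injectivity on the free part should give that $f^a(y)=f^b(x_0)$ and $f^{a'}(y)=f^{b'}(x_0)$ imply $b-a=b'-a'$. Apply $f^{a'}$ to the first equation and $f^{a}$ to the second to get $f^{b+a'}(x_0)=f^{a+a'}(y)=f^{b'+a}(x_0)$, and then use freeness of $x_0$. Then $\phi:C\to\mathbb Z^d$ is $1$-Lipschitz from the graph metric to the $\ell^1$ metric. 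We also need bounded fibres at bounded scales. If $d_G(y,z)$ is large, distances might still collapse in $\mathbb Z^d$, since many points (the preimages) share a coordinate. So $\phi$ is not a coarse embedding, and we must handle the ``tree-like'' fibre directions separately.

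\textbf{Step 2 (handling the fibre directions).} The key observation is that $f^a(y)=f^b(z)$ with $\phi(y)=\phi(z)$ forces $a=b$. We expect to show that if $\phi(y)=\phi(z)$ and $f^a(y)=f^a(z)$, then $d_G(y,z)\le 2|a|$. Conversely, within distance $r$ the merging time is controlled. The structure would then be a ``horoball'' picture: a $\mathbb Z^d$-lattice of heights, with bounded-degree branching backwards. We would adapt the Gao--Jackson or CJMST proof that $\mathbb Z^d$-actions have Borel asymptotic dimension $\le d$, taking as bounded-diameter pieces the preimages $f^{-a}(B)$ of Borel boxes $B$ in forward orbits. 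Concretely, for scale $r$ we would Borel-tile each forward lattice by boxes of side $\gg r$ using $d+1$ colour classes, as in the $\mathbb N^d$ toast or asdim construction. Each piece is pulled back by taking all points whose image under $f^{(R,\dots,R)}$ lands in a given box. The obstacle is to make these choices coherent across different forward orbits that merge. Commutativity lets us define everything via the image under a large power $f^{(R,\dots,R)}$, which glues orbits.

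\textbf{Main obstacle.} The hardest step is the uniform bound on the diameter of the pulled-back pieces. Pulling back along a bounded-to-$1$ map with depth $R$ can produce pieces of graph diameter up to about $2dR$ but not more, since any two points with the same image under $f^{(R,\dots,R)}$ are joined by a path through that image. This gives a bound of order $R$, uniform in $x$, which is what asymptotic dimension needs. We then need to check that pieces of the same colour are $r$-separated, using the $1$-Lipschitz map $\phi$ and the separation of same-coloured boxes in $\mathbb Z^d$. We would try this first with $R$ chosen of the order of the box size, verifying Borelness via the Lusin--Novikov uniformization.
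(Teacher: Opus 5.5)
Your Step~1 is correct. Your closing device is also sound, and it is the one the paper uses. If $d_G(x,y)\le r\le R$, write $f^p(x)=f^q(y)$ with $|p|+|q|\le r$. Then $f^{\mathbf R}(y)=f^{\mathbf R-q+p}(x)$ for $\mathbf R=(R,\dots,R)$, so after pushing forward, everything relevant to an $r$-ball lies on the single forward orbit of $x$, where your coordinate $\phi$ is injective. (The paper does this with $\Phi_\tau$, $\tau=(m,\dots,m)$.) The diameter bound you single out as the main obstacle is immediate.

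The genuine gap is the object you pull back. You never construct a Borel set or partition of $F$ whose trace on \emph{every} forward orbit $\{f^a(x):a\in\mathbb N^d\}$ is, in the coordinates $a$, a box tiling (or marker set) with the required separation and covering. Forward orbits overlap, so they cannot be tiled one at a time. The Gao--Jackson and CJMST constructions decide membership at a point by inspecting a neighbourhood in all $2d$ directions, and here the backward directions branch. Pulling pieces back along $f^{\mathbf R}$ does not address this, because $f^{\mathbf R}(F)$ branches backwards just as $F$ does. So ``define everything via the image under a large power'' presupposes the coherent tiling, and that is essentially the whole content of the theorem.

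The missing idea is the paper's forward recurrent marker lemma, where the shift is applied to the colouring rather than to the pieces.
\begin{enumerate}[label=(\arabic*)]
\item Take a Borel proper colouring $c'$ with $M$ colours (Kechris--Solecki--Todorcevic) of the graph joining distinct points at $G$-distance at most $dm$.
\item Let $\mathcal G$ be the greedy rule on $\mathbb Z^d$ that builds an $m$-separated, $m$-covering set colour by colour; it has finite range $L=Mm$.
\item Declare $x$ a marker iff $\mathcal G$ accepts the pattern $v\mapsto c'(f^{\mathbf R+v}(x))$, $v\in[-L,L]^d$, where now $R=2L$.
\end{enumerate}
Every point read is a forward image of $x$, so no branching is ever seen. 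By freeness, exponent vectors at $\ell^\infty$-distance between $1$ and $m$ receive different colours. The pattern read at $f^a(x)$ is the restriction of the single colouring $u\mapsto c'(f^{\mathbf R+u}(x))$ to the box $a+[-L,L]^d$. Hence the marker set restricted to any forward orbit is exactly the output of one greedy run on $\{u\in\mathbb Z^d:u_i\ge -R\text{ for all }i\}$, so it is $m$-separated and forward $2m$-covering.

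The classes are then the fibres of the map sending $x$ to the lexicographically first marker among the $f^{\tau+a}(x)$, $a\in\{0,\dots,2m\}^d$. All markers reached from an $r$-ball about $x$ lie in $\{f^v(x):v\in\{0,\dots,4m\}^d\}$, which contains at most $5^d$ of them. No $(d+1)$-colour box tiling and no global use of $\phi$ is needed.
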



\begin{theorem}[Naryshkin, Shinko, Weilacher, Yu, \cite{NSWY}]
    Let $G$ be the Borel graph generated by finitely many bounded-to-1 commuting functions. And let $F$ be as above and $N$ be the complement of $F$. Then $G\upharpoonright N$ is hyperfinite.
\end{theorem}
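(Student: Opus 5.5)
Here is the plan. On the non-free part $N$, every point has a coincidence: for $x\in N$ there are distinct $a\neq b$ in $\mathbb N^d$ with $f^a(x)=f^b(x)$, writing $f^a=f_1^{a_1}\cdots f_d^{a_d}$. I would exploit such coincidences to reduce the rank of the acting monoid, and argue by induction on $d$. The induction hypothesis is that any Borel graph generated by fewer than $d$ bounded-to-1 commuting Borel functions is hyperfinite, on both its free and non-free parts. The case $d=1$ is the classical fact that the graph of a single countable-to-1 Borel function is hyperfinite (Dougherty--Jackson--Kechris). For the free part I use the preceding theorem.

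\textbf{Step 1: decompose $N$ into invariant Borel pieces.} For each pair $a\neq b$, let $N_{a,b}=\{x: f^a(x)=f^b(x)\}$. This set is Borel, and it is forward invariant, because $f^a(f_i x)=f_i f^a(x)=f_i f^b(x)=f^b(f_i x)$. Countably many pairs cover $N$. Since a countable union of hyperfinite Borel equivalence relations need not be hyperfinite, I instead fix a Borel enumeration of the pairs and let $x$ belong to the first pair $(a,b)$ that holds for some point $f^c(x)$ in its forward orbit. The saturations $[N_{a,b}]_G$ are $G$-invariant Borel sets, because bounded-to-1 maps make saturations Borel. So I obtain a Borel partition of $N$ into $G$-invariant pieces $P_{a,b}$. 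On each $P_{a,b}$ every component eventually enters $N_{a,b}$ under forward iteration. It therefore suffices to prove that $G$ restricted to each $P_{a,b}$ is hyperfinite.

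\textbf{Step 2: reduce the rank on $P_{a,b}$.} Cancelling common coordinates, I may assume $a$ and $b$ have disjoint supports, and without loss of generality $a_d>0$. On $N_{a,b}$, where $f^a=f^b$, the relation $f_d^{a_d}f^{a'}=f^b$ shows that the forward orbit of $x$ is contained in $\{f^c f_d^{j}(x): c\in\mathbb N^{d-1},\ j<a_d\}$. So on $N_{a,b}$ the forward orbit is generated, up to finitely many translates, by $f_1,\dots,f_{d-1}$. The components of $G$ on $P_{a,b}$ are then handled through two facts. First, the map $x\mapsto f^c(x)$, for the least $c$ landing in $N_{a,b}$, is a bounded-to-1-type Borel reduction piece. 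Second, hyperfiniteness of $E_G$ follows from hyperfiniteness of the tail relation together with finite-index extensions. Concretely, I would consider the graph $G'$ on $N_{a,b}$ generated by $f_1,\dots,f_{d-1}$ and the single map $f_d$. Its connectedness relation differs from the $f_1,\dots,f_{d-1}$-relation only by a uniformly bounded amount, namely $a_d$ steps. By induction the latter is hyperfinite, and a bounded extension of a hyperfinite relation generated this way stays hyperfinite. This holds because each class of the finer relation is joined to at most boundedly many others through $f_d$, and the join of a hyperfinite relation with a bounded-to-1 Borel map commuting with its generators can be handled by Jackson--Kechris--Louveau style tail arguments. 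Finally, I pass from $N_{a,b}$ back to $P_{a,b}$. Every point of $P_{a,b}$ maps forward into $N_{a,b}$ by a Borel choice of $f^c$, and pulling back hyperfiniteness along a countable-to-1 forward map is standard: if $E$ on $Y$ is hyperfinite and $\pi:X\to Y$ is Borel countable-to-1 with $x\,E_G\,x'$ iff $\pi(x)\,E\,\pi(x')$ up to the tail, then the tail relation of $\pi$ is hyperfinite.

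\textbf{Main obstacle.} The delicate point is Step 2. The naive claim that $E_G$ on $N_{a,b}$ equals the $(d-1)$-generated relation is false, since backward $f_d$-edges can join new points. I would try first to show directly that $f_d^{a_d}$ maps $N_{a,b}$ into the $\{f_1,\dots,f_{d-1}\}$-saturation in a way that makes $E_G$ a bounded-index extension, using that $f_d$ is bounded-to-1. If that fails, I would instead replace $f_d$ by the function $g=f^b$ on $N_{a,b}$ and the $f_i$ for $i<d$ by suitable restrictions, so that the graph becomes generated by $d-1$ commuting bounded-to-1 functions together with a bounded-to-1 finite-order correction. I would then apply the induction hypothesis to the free and non-free parts of that new system.
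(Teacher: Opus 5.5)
Step 1 is fine. $N_{a,b}$ is forward invariant, and by the common future lemma the property ``some forward point lies in $N_{a,b}$'' is $E_G$-invariant. Cancelling $\min(a,b)$ is legitimate once you move forward by $f^{\min(a,b)}$ (the maps need not be injective). Passing from $N_{a,b}$ back to $P_{a,b}$ is the standard fact that hyperfiniteness on a Borel complete section gives hyperfiniteness.

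\textbf{The gap is in Step 2.} Your claim that on $N_{a,b}$ the forward orbit of $x$ lies in $\{f^cf_d^j(x):c\in\mathbb N^{d-1},\ j<a_d\}$ would need $f_d^{a_d}=f^b$, i.e.\ $a'=0$. In general it is false, and choosing the coordinate cleverly does not help. On $X=\mathbb N^3$ let $f_1,f_2,f_3,f_4$ be translation by $(1,0,0),(0,1,1),(0,0,1),(1,1,0)$. These maps are injective and commute, and $f_1f_2=f_3f_4$ everywhere. Yet for no $i$ is $f_i(0,0,0)$ in the forward orbit of $(0,0,0)$ under the other three maps. That containment was your only support for the assertion that each $E_G$-class in $N_{a,b}$ is a union of boundedly many classes of the relation $E'$ generated by $f_1,\dots,f_{d-1}$. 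This index bound is exactly what the rank reduction needs. Your ``main obstacle'' paragraph concedes it is open, and ``Jackson--Kechris--Louveau style tail arguments'' is not an argument. So the induction step is not established.

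\textbf{The missing idea.} The index bound is nevertheless true, with at most $a_d$ classes of $E'$, for a different reason. Put $v=a-b$.

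First, for $x\in N_{a,b}$ and $u,\,u+jv\in\mathbb N^d$ one has $\Phi_u(x)=\Phi_{u+jv}(x)$. The admissible $j$ form an interval, and consecutive terms are $w+b$ and $w+a$ with $w\in\mathbb N^d$, while $\Phi_w(x)\in N_{a,b}$ by forward invariance.

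Second, let $x,y\in N_{a,b}$ with $\Phi_p(x)=\Phi_q(y)$ and $p_d\equiv q_d\pmod{a_d}$. Choose $c\in\mathbb N^d$ with $a_d\mid p_d+c_d$ and $c_i$ large for $i\in\operatorname{supp}(a)\setminus\{d\}$. Subtracting $\tfrac{p_d+c_d}{a_d}v$ from $p+c$ and $\tfrac{q_d+c_d}{a_d}v$ from $q+c$ lands in $\mathbb N^{d-1}\times\{0\}$. By the first point this does not change $\Phi_{p+c}(x)=\Phi_{q+c}(y)$, so $x\mathrel{E'}y$.

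Third, let $y_1,y_2\in[x]_{E_G}\cap N_{a,b}$ have witnesses $\Phi_{p^k}(x)=\Phi_{q^k}(y_k)$ with $(p^1-q^1)_d\equiv(p^2-q^2)_d\pmod{a_d}$. Then $\Phi_{q^1+p^2}(y_1)=\Phi_{q^2+p^1}(y_2)$ puts them in the second case, which gives the bound of $a_d$ classes.

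To finish your induction you then need three things: this bound, the induction hypothesis applied to $f_1,\dots,f_{d-1}$ restricted to $N_{a,b}$, and the Jackson--Kechris--Louveau fact that a finite-index extension of a hyperfinite relation is hyperfinite. Together with the complete-section fact, that completes the argument.

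\textbf{Comparison with the paper.} The repaired argument would be a genuinely different route. The paper partitions points by the stabilizer lattice $\Lambda(x)$. It uses R\'edei's theorem to find a core $C_H$ on which the cancellative monoid $P_H$ acts freely, and gets finite Borel asymptotic dimension on each layer $Y_{H,m}$ from Theorem 3.1 and the retractions $\pi_m$. It then concludes with the Conley--Jackson--Marks--Seward--Tucker-Drob increasing-union theorem. Your route would need only Theorem 1.1 and classical closure properties of hyperfiniteness. The cost is that it gives no asymptotic-dimension information on $N$.
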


\begin{corollary}\label{main}
    Let $G$ be the Borel graph generated by finitely many bounded-to-1 commuting functions. Then $G$ is hyperfinite.
\end{corollary}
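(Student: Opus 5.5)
The corollary should follow from the two theorems of Naryshkin, Shinko, Weilacher and Yu stated above, together with the observation that the free part $F$ and its complement $N$ are Borel, $G$-invariant, and partition $X$. The plan has three steps: verify that $F$ is Borel, verify that $F$ is a union of connected components of $G$ (so there are no edges between $F$ and $N$), and then glue the hyperfiniteness witnesses on the two pieces.

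\emph{Borelness.} We have $x\in F$ iff for all distinct $a,b\in\mathbb N^d$, $f^a(x)\neq f^b(x)$, where $f^a=f_1^{a_1}\cdots f_d^{a_d}$. This is a countable intersection of Borel sets, so $F$ is Borel and $N=X\setminus F$ is Borel.

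\emph{Invariance.} Since every edge comes from some $f_i$, it suffices to show that $x\in F \iff f_i(x)\in F$.

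Suppose $x\in F$. If $f^a(f_i x)=f^b(f_i x)$ with $a\ne b$, then by commutativity $f^{a+e_i}(x)=f^{b+e_i}(x)$ with $a+e_i\neq b+e_i$, contradicting $x\in F$. So $f_i(x)\in F$.

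Conversely, suppose $x\in N$, so $f^a(x)=f^b(x)$ for some $a\neq b$. Applying $f_i$ to both sides and commuting gives $f^a(f_i x)=f^b(f_i x)$, so $f_i(x)\in N$.

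Hence both $F$ and $N$ are closed under every $f_i$. Since any edge $\{x,y\}$ has $y=f_i(x)$ or $x=f_i(y)$, no edge joins $F$ to $N$. Notably, the converse direction uses only forward application of $f_i$, not injectivity, so bounded-to-1 is not needed here.

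\emph{Gluing.} By the two theorems, $G\upharpoonright F$ and $G\upharpoonright N$ are hyperfinite. Take increasing sequences of finite Borel equivalence relations $E^F_n$ on $F$ and $E^N_n$ on $N$ whose unions are the connectedness relations of the respective restricted graphs. Set $E_n=E^F_n\cup E^N_n$. Each $E_n$ is a finite Borel equivalence relation on $X$, the sequence is increasing, and its union is the connectedness relation of $G$, because every $G$-component lies entirely in $F$ or entirely in $N$. Thus $G$ is hyperfinite.

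The only real point to check is the invariance step, specifically that membership in $N$ propagates forward along $f_i$. Commutativity handles this directly, so I expect no serious obstacle: the substantive content is in the cited theorems.
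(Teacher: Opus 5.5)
Your proposal is correct and follows the paper's route: the corollary is obtained by combining Theorems 1.1 and 1.2 over an invariant Borel partition of $X$. Your commutativity check that $F$ and $N$ are each closed under every $f_i$ is the same argument as Lemma~\ref{lem:invariant} ($\Lambda(f_i(x))=\Lambda(x)$); the only cosmetic difference is that the paper glues over the finer countable partition $\{X_H: H\le\mathbb Z^d\}$, with $F=X_{\{0\}}$, rather than just $F\sqcup N$.
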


\begin{question}
    Let $G$ be the Borel graph generated by finitely many bounded-to-1 commuting functions. Then is $G\upharpoonright N$ of finite Borel asymptotic dimension?
\end{question}

\cite[Question 6.2]{GH} remains open, however, \cite[Problem 44]{OpenProblems} is negative. There is a non-hyperfinite Borel digraph with constant forward out-neighborhood growth. Let $G$ be the undirected Schreier graph $F(2^{\mathbb F_2})$, and let $H$ be the digraph with $V(H)=V(G)\times 2$, $$(x,0)\to (y,1)\in A(H)\Longleftrightarrow (x,y)\in E(G)\text{ or }x=y,$$ and there are no more arcs. In this digraph, each vertex is of bounded degree and any directed path is of length 1, so both forward and backward neighborhood are of polynomial growth with degree 0. The connectedness relation contains a universal treeable equivalence relation and thus is not hyperfinite.

\textbf{Statement of independence of this paper.} The author was told that Naryshkin, Shinko, Weilacher and Yu proved Theorem 1.1 and Theorem 1.2 but cannot answer Question 1.4. The author finishes the proof of Theorem 1.2 independently by reducing it to Theorem 1.1. By the time of submission of this paper, their paper is still in preparation.

This paper is a collaboration with artificial intelligence. The AI model is gpt 5.6. The AI fetches papers and explains papers for human and works under human suggestions. All AI generated contents are verified by human.

The rest of the paper is organized as follows. In Section 2, we introduce some definitions and notations. In Section 3, we include a proof of Theorem 1.1 by Naryshkin, Shinko, Weilacher and Yu for completeness, and we upgrade it into Theorem 3.1 for later use. In Section 4, we show a proof of Theorem 1.2, the only use of Theorem 3.1 in Section 4 is the proof of Proposition \ref{prop:coreasdim}.

\section{Preliminaries}

A countable Borel equivalence relation $E$ is \emph{hyperfinite} if
there are finite Borel equivalence relations
\[
E_0\subseteq E_1\subseteq\cdots
\quad\text{with}\quad
E=\bigcup_n E_n.
\]
A Borel graph or digraph is hyperfinite if its connectedness relation is hyperfinite.

Let $(X,\rho)$ be a Borel extended metric space whose finite-distance
relation is countable.  One has
$\asdimB(X,\rho)\leq n$ if for every $r>0$ there is a Borel
equivalence relation $F$ such that:
\begin{enumerate}[label=(\roman*)]
\item the $F$-classes have uniformly bounded $\rho$-diameter;
\item every $\rho$-ball of radius $r$ meets at most $n+1$
$F$-classes.
\end{enumerate}
Usually, the path distance of a Borel locally finite graph is a Borel extended metric. The Borel asymptotic dimension of a graph is that of its path distance. If a Borel graph is of finite Borel asymptotic dimension then it is hyperfinite.

We will use the increasing union theorem for graphs of finite Borel asymptotic dimension.

\begin{theorem}[Conley, Jackson, Marks, Seward, Tucker-Drob, \cite{CJMST}]
\label{thm:union}
Let
\[
G_0\subseteq G_1\subseteq\cdots
\]
be locally finite Borel graphs.
Suppose $\asdimB(G_n)<\infty$ for every $n$.  Then
$
\bigcup_n {G_n}
$
is hyperfinite.
\end{theorem}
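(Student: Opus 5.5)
The plan is to build, by recursion on $k$, finite Borel equivalence relations $E_0\subseteq E_1\subseteq\cdots$ whose union is the connectedness relation of $G=\bigcup_n G_n$. The recursion uses one simple monotonicity fact. Since $G_{k-1}\subseteq G_k$, the path metric $\rho_k$ of $G_k$ satisfies $\rho_k\le\rho_{k-1}$. Hence any family of sets that is uniformly bounded for $\rho_{k-1}$ is also uniformly bounded for $\rho_k$. This lets a relation built at stage $k-1$ serve as a bounded ``input'' at stage $k$, where we will use $\asdimB(G_k)<\infty$.

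\textbf{Step 1 (colored covers with a Lebesgue number).} Fix $k$ and put $d=\asdimB(G_k)$. First I would check the standard reformulation of the definition. For every $r$ there are Borel sets $U_0,\dots,U_d$ covering the vertex set, and each $U_i$ is a disjoint union of pieces with three properties: the pieces have uniformly bounded $\rho_k$-diameter, distinct pieces of the same color are $\rho_k$-separated by more than $r$, and every $\rho_k$-ball of radius $r$ lies inside a single piece of some color. To obtain this, apply the definition at a much larger scale $R\gg r$ and use the multiplicity bound to color points by ``which class is $R$-deep around them''. This is the usual equivalence between the multiplicity and colored-cover definitions. It can be done in a Borel way because $G_k$ is locally finite, so all the sets involved are finite and locally defined.

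\textbf{Step 2 (absorbing the previous stage).} Suppose $E_{k-1}$ is already built, with classes of $\rho_{k-1}$-diameter at most $s$, hence of $\rho_k$-diameter at most $s$. Apply Step 1 to $G_k$ with $r\gg s+k$, choosing $r$ inductively as for colors $0,\dots,d$ in turn. Then process the colors in order $i=0,1,\dots,d$. At step $i$, replace each color-$i$ piece $P$ by the union of $P$ with all current classes meeting $P$. Because the current classes have diameter much smaller than the separation between same-color pieces, no class meets two pieces of the same color. So this merging produces a well-defined finite Borel equivalence relation extending the previous one, and the diameter increases only by a controlled amount at each of the $d+1$ steps. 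Let $E_k$ be the result after color $d$. Then $E_{k-1}\subseteq E_k$, the $E_k$-classes have bounded $\rho_k$-diameter, and every $\rho_k$-ball of radius $k$ is contained in a single $E_k$-class. The last property holds because such a ball lies in one piece, and that piece is absorbed into a single class.

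\textbf{Step 3 (exhaustion).} Let $x,y$ be in the same $G$-component. Then they are joined by a finite path lying in some $G_m$, so $\rho_k(x,y)\le\rho_m(x,y)=\ell<\infty$ for all $k\ge m$. Taking $k\ge\max(m,\ell)$ gives $x\,E_k\,y$. Therefore $\bigcup_k E_k$ is the connectedness relation of $G$, which is hence hyperfinite.

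The main obstacle is Step 2, and specifically the bookkeeping of the scales. Each absorption increases the diameter of classes, so the separation $r_i$ for color $i$ must dominate the diameter bound reached after processing colors $0,\dots,i-1$. I would therefore choose the scales top-down: apply Step 1 once with a single huge $r$, and verify that $d+1$ rounds of growth by at most $2\cdot(\text{current diameter})+(\text{piece diameter})$ stay below $r$. A second point needs care here. Step 1 gives its uniform diameter bound at scale $r$ only after $r$ is fixed, so the separation must be compared with the diameters of the old classes, which are known in advance, and not with the diameters of the new pieces. I expect this to work because the pieces of a single color never interact with each other, so only the old classes need to be smaller than the separation.
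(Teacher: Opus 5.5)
The paper does not prove this statement; it quotes it as a special case of \cite[Theorem 1.10]{CJMST}. Your Step 2 has a genuine gap, and it is not a matter of choosing scales: the invariant you want is unattainable. Suppose the $E_k$-classes have bounded $\rho_k$-diameter and every $\rho_k$-ball of radius $k\geq 1$ lies in a single $E_k$-class. Then any two $G_k$-adjacent vertices are $E_k$-equivalent, so every $G_k$-component lies in one $E_k$-class. So as soon as $G_k$ has an infinite component, no finite $E_k$ can satisfy it; an example is the Schreier graph of a free Borel $\mathbb Z$-action, which has finite Borel asymptotic dimension.

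The sentence that fails is ``no class meets two pieces of the same color''. It holds for the old $E_{k-1}$-classes. But once color $0$ is processed, the current classes include the absorbed color-$0$ pieces. These contain $\rho_k$-balls of radius $r$, so on infinite components their diameter is at least of order $r$, not small compared with the separation $r$ of the color-$1$ pieces. In $\mathbb Z$, for instance, a typical two-color cover with Lebesgue number $r$ consists of alternating overlapping intervals, and each color-$0$ interval meets two color-$1$ intervals. Resolving such conflicts by transitive closure chains pieces together along the whole component. In general, every edge lies inside some piece, so merging all pieces of a cover with Lebesgue number at least $1$ produces exactly the components. A top-down choice of scales cannot help, because the piece-diameter bound at scale $r$ is itself at least of order $r$.

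Finite asymptotic dimension only controls how many classes a ball meets, not whether it lies in one class, and a correct proof has to be built around that. What does work is to pull back instead of merging. Suppose $F_{k-1}$ has classes of $\rho_k$-diameter at most $s$. Fix a Borel selector $c$ for its classes and a witness $W$ for $\asdimB(G_k)\leq d_k$ at scale $k+s$, and set $x\,F_k\,y\iff c(x)\,W\,c(y)$. Then:
\begin{itemize}
\item $F_{k-1}\subseteq F_k$;
\item the $F_k$-classes are $\rho_k$-bounded, hence finite;
\item every $\rho_k$-ball of radius $k$ meets at most $d_k+1$ classes of $F_k$.
\end{itemize}
So $F_\infty=\bigcup_kF_k$ is hyperfinite, but it need not be the whole connectedness relation $E_G$, and an extra step is needed.

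If $d_k\leq d$ for all $k$, a pigeonhole argument like your Step 3 shows that every $E_G$-class contains at most $d+1$ classes of $F_\infty$. The theorem of Jackson, Kechris and Louveau, that a finite-index extension of a hyperfinite relation is hyperfinite, then finishes the proof. This uniform case already covers the use in Section 4, because the witnesses built in Proposition~\ref{prop:Yasdim} have multiplicity independent of $m$. If the $d_k$ are unbounded, this counting gives no finite bound and a further idea is required; that is the content of the result the paper cites from \cite{CJMST}.
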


This is an immediate specialization of \cite[Theorem 1.10]{CJMST}.

Let $X$ be a standard Borel space and let
$
f_1,\ldots,f_d:X\longrightarrow X
$
be commuting Borel functions.  For $a=(a_1,\ldots,a_d)\in\N^d$, we write
\[
\PhiA_a=f_1^{a_1}\cdots f_d^{a_d},
\qquad
\abs{a}=a_1+\cdots+a_d.
\]

We say that a subset $A$ is \emph{forward invariant} along $f_i$ if $f_i(A)\subseteq A$ and \emph{backward invariant} along $f_i$ if $f_i^{-1}(A)\subseteq A$. If a set is both forward and backward invariant along any $f_i$, then it is invariant in the equivalence relation. We say that a subset $A$ is \emph{forward recurrent} if for any vertex $x\notin A$ there is a directed path from $x$ to some vertex in $A$, see also \cite{Higgins}. The \emph{free part} is
\[
F
=
\{x\in X:\PhiA_a(x)=\PhiA_b(x)\Longrightarrow a=b\}.
\]
For an extended metric $\rho$, write
\[
E_\rho=\{(x,y)\in X^2:\rho(x,y)<\infty\}.
\]

A \emph{commutative monoid} $(P,+)$ is a commutative semigroup with its addition operation both associative and commutative. $P$ is \emph{finitely generated} if there is a finite generating set. $P$ is \emph{cancellative} if $$\forall\ a,b,c\ a+c=b+c\Longrightarrow a=b.$$ Its \emph{Grothendieck group}, or \emph{group
completion}, is
\[
\operatorname{Gp}(P):=(P\times P)/{\sim},
\]
where
\[
(a,b)\sim(c,d)
\quad\Longleftrightarrow\quad
a+d=b+c.
\]
The equivalence class of \((a,b)\) is denoted by \(a-b\), and the group
operation is
\[
(a-b)+(c-d)=(a+c)-(b+d).
\]

There is a canonical monoid homomorphism
\[
\iota:P\longrightarrow \operatorname{Gp}(P),
\qquad
a\longmapsto a-0.
\]
Since \(P\) is cancellative, this map is injective.

Because \(P\) is finitely generated, \(\operatorname{Gp}(P)\) is a
finitely generated abelian group. Hence
$
\operatorname{Gp}(P)\cong \mathbb Z^r\oplus T,
$
where \(T\) is a finite abelian group. The \emph{rank} of \(P\) is defined to be
\[
\operatorname{rank}(P)
:=
\operatorname{rank}_{\mathbb Z}\operatorname{Gp}(P)
=
\dim_{\mathbb Q}\left(
\operatorname{Gp}(P)\otimes_{\mathbb Z}\mathbb Q
\right)
=r.
\]

\section{Proof of Theorem 1.1 and an upgrade}
\begin{theorem}
\label{hyp:free}
Let $X$ be a standard Borel space and $P$ be a finitely generated cancellative commutative monoid, and
let $P$ act Borelly on  $X$.  Suppose that
\begin{enumerate}[label=(\roman*)]
\item the action is free, meaning $p\cdot x=q\cdot x$ implies $p=q$;
\item a fixed finite generating set of $P$ acts by bounded-to-one
Borel functions.
\end{enumerate}
Then the graph generated by functions by the finite generating set has finite Borel
asymptotic dimension.
\end{theorem}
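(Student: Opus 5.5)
The plan is to transfer the problem to a free action of the abelian group $\Gamma=\operatorname{Gp}(P)\cong\Z^r\oplus T$ on a larger space, where finite Borel asymptotic dimension is known, and then pull the bound back.

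\textbf{Step 1: the lifted space.} Let $s_1,\dots,s_k$ be the fixed generators and $\sigma=s_1+\cdots+s_k$. Every element of $\Gamma$ has the form $p-m\sigma$ with $p\in P$ and $m\in\N$. Put
\[
\widetilde X=\{(x_n)_{n\in\N}\in X^{\N}: \sigma\cdot x_{n+1}=x_n\}.
\]
This is the standard inverse-limit construction making $\sigma$ invertible. Elements of $P$ act coordinatewise, and $\sigma$ acts bijectively by shifting. Hence $\Gamma$ acts Borelly on $\widetilde X$. Freeness of the $P$-action on $X$ implies freeness of the $\Gamma$-action on $\widetilde X$: if $(p-m\sigma)\cdot\tilde x=(q-l\sigma)\cdot\tilde x$, then comparing deep coordinates and cancelling gives $p+l\sigma=q+m\sigma$.

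However, not every $x\in X$ need admit an infinite backward $\sigma$-chain. Points with no such chain must be handled separately. Bounded-to-one implies that preimage trees are finite-branching, so by K\"onig's lemma, $x$ lies in the projection of $\widetilde X$ iff $x$ has $\sigma$-preimages at every finite depth. The complementary set consists of points of bounded backward depth. There one can instead use the grading by backward depth, or add an artificial backward tail via a product $X\times\N$.

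\textbf{Step 2: asdim upstairs.} On $\widetilde X$, the Schreier graph of the free Borel action of the finitely generated abelian group $\Gamma$ has finite Borel asymptotic dimension, by Conley--Jackson--Marks--Seward--Tucker-Drob (polynomial growth suffices, via Bernshteyn--Yu). Moreover, because the action is free, each orbit is isometric to the Cayley graph of $\Gamma$ with generators $s_i$.

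\textbf{Step 3: comparison of metrics on $X$.} Let $\rho$ be the path metric of $G$ on $X$. For $x,y$ in one component, $\rho(x,y)\le R$ implies $p\cdot x=q\cdot y$ for some $p,q\in P$ with $|p|,|q|\le R$, by commutativity, pushing forward along a path of length $R$. Conversely such a relation gives $\rho\le 2R$. Freeness makes $p-q$ well defined up to the ambiguity coming from different representatives. I would then define a Borel cover of $X$ by pulling back, via a Borel section $X'\to\widetilde X$ with $x_0=x$, a bounded-diameter partition $\widetilde{\mathcal F}$ of $\widetilde X$ with multiplicity $n+1$ at scale $r'$. For this I would use the $\Gamma$-coordinates relative to the component. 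Concretely, the coarse map $x\mapsto\tilde x$ is $\Gamma$-equivariant on the image of $P$, and $\rho$ is coarsely equivalent to the word metric of $\Gamma$ pulled back via the pair $(p,q)$. Uniform bounded-to-one-ness ensures that each $\rho$-ball of radius $r$ in $X$ is covered by boundedly many (say $M$) translates $\Gamma$-balls. This gives multiplicity at most $M(n+1)$, which is still finite.

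\textbf{Main obstacle.} The hardest step is Step 3, specifically the non-injectivity of the projection $\widetilde X\to X$. Different lifts of the same $x$ lie in different $\Gamma$-orbits, so a partition upstairs does not descend to a well-defined partition downstairs. I would first try choosing lifts Borel-uniformly, taking the lexicographically least backward chain in a fixed Borel linear order; bounded-to-one-ness makes this Borel. I would then check that two $\rho$-close points $x,y$ have chosen lifts $\tilde x,\tilde y$ with $p\tilde x=q\tilde y$ up to bounded error. If that fails, the fallback is to define the partition directly on $X$ using only forward data: the classes of $\rho$ are unions of sets $\{y: p\cdot x=q\cdot y\}$, and a Gao--Jackson-style marker region construction in the $P$-coordinates of forward images is performed, then pulled back by preimages. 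Bounded-to-one-ness again controls multiplicity.
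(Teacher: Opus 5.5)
Your Steps 1 and 2 are sound, but Step 3 has a genuine gap, and no choice of section can close it. Each piece of a bounded-diameter partition of $\widetilde X$ lies in a single $\Gamma$-orbit. So after pulling back along a section $s\colon X'\to\widetilde X$, the ball $B_\rho(x,r)$ meets at least as many classes as there are $\Gamma$-orbits among the lifts $s(y)$, $y\in B_\rho(x,r)$, and this number grows with $r$.

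Take $P=\N$ acting by the one-sided shift $S$ on the non-eventually-periodic points of $2^{\N}$. The action is free and exactly $2$-to-$1$, every point has infinite backward chains, and $\sigma$ acts on $\widetilde X$ by $(z_0,z_1,\ldots)\mapsto(Sz_0,z_0,z_1,\ldots)$. Let $w,w'$ be words of length $r$ with $s(w'x)=\sigma^k s(wx)$. Comparing $0$-th coordinates gives $S^k(wx)=w'x$ if $k\geq0$, and $S^{\abs{k}}(w'x)=wx$ if $k<0$. Applying $S^r$ yields $S^{\abs{k}}x=x$, so $k=0$ and $w=w'$. Thus the $2^r$ points $wx$ in the $r$-ball have lifts in $2^r$ distinct orbits, for \emph{every} section, lexicographically least or not.

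Your constant $M$ is therefore exponential in $r$, whereas asymptotic dimension needs a multiplicity bound independent of $r$. The claimed equivariance of $x\mapsto\tilde x$ also fails, since $s(p\cdot x)\neq p\cdot s(x)$ in general. The points without infinite backward chains are left unresolved as well.

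The missing idea is the one your fallback only names; in the paper it carries the entire proof, and one never leaves $X$. For a free $\N^d$-action, a Borel proper coloring $c'$ of the distance-$dm$ graph is replaced by $c(x)=c'(\PhiA_{\mathbf R}(x))$. This coloring is locally constant on fibers, so the local greedy rule can be evaluated at $x$ from the forward images $\PhiA_{\mathbf R+v}(x)$, $v\in[-L,L]^d$, alone. The result is a marker set $A$ that is $m$-separated and forward $2m$-covering along every forward orbit (Lemma~\ref{lem:forward-markers}). The classes are the fibers of the map $\pi$ sending $x$ to a marker in the forward box of $\PhiA_\tau(x)$. By Lemma~\ref{lem:common}, every $y\in B_\rho(x,r)$ has $\pi(y)=\PhiA_{\tau-q+p+b}(x)$ inside the single box $\{\PhiA_v(x):v\in\{0,\ldots,4m\}^d\}$, which holds at most $5^d$ markers. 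Routing every point of the ball into the forward cone of its center is what makes the multiplicity independent of $r$.

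A general $P$ is then reduced to this case. One chooses $q_1,\ldots,q_r\in P$ spanning a finite-index lattice $L$ with $Q=\sum_i\N q_i$ cofinal in each coset (Lemma~\ref{lem:cofinal-submonoid}). The bound is transferred through a coarse comparison in which each component contains at most $[K:L]$ components of the $Q$-graph, where $K=\operatorname{Gp}(P)$ (Lemmas~\ref{lem:coarse-transfer} and~\ref{lem:action-metrics}). Neither ingredient appears in your proposal, so as written it does not prove the theorem.
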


We remark that $E$ is induced by a finitely generated commutative monoid action, is equivalent to, $E$ is the connectness relation of graph generated by finitely many commuting functions. However, $E$ is induced by a finitely generated commutative monoid free action, is not equivalent to, $E$ is the connectness relation of the free part of graph generated by finitely many commuting functions.

The following proof of Theorem~1.1 is due to Naryshkin, Shinko, Weilacher and Yu, we include the argument for completeness, the idea is basically the proof of \cite[Lemma 8.3]{CJMST}.

\begin{lemma}[Forward recurrent marker lemma]\label{lem:forward-markers}
For every integer $m\geq1$ there is a Borel set $A\subseteq F$ such
that, for every $x\in F$,
\begin{enumerate}[label=(\roman*)]
\item if $a,b\in\N^d$, $a\neq b$, and
$\PhiA_a(x),\PhiA_b(x)\in A$, then
$
\lVert a-b\rVert_\infty>m;
$
\item there is $a\in\N^d$ with
$\lVert a\rVert_\infty\leq2m$ such that $\PhiA_a(x)\in A$.
\end{enumerate}
\end{lemma}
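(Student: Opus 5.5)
A maximal independent set of the right auxiliary Borel graph on $F$ should give $A$ directly. Let $H_m$ be the graph on $F$ in which $y\neq z$ are adjacent iff there exist $x\in F$ and $a\neq b\in\N^d$ with $\lVert a-b\rVert_\infty\le m$ and $\{\PhiA_a(x),\PhiA_b(x)\}=\{y,z\}$. Equivalently, since $F$ is forward invariant under each $f_i$, we may take $x$ to be a common backward iterate. In that case $y=\PhiA_c(w)$ and $z=\PhiA_{c'}(w)$ with $\min(c,c')=0$ coordinatewise and $\lVert c-c'\rVert_\infty\le m$, for some $w$ forward-reachable to both. If the relation is quantified over pairs $(a,b)$ occurring along a common $x$ like this, it is Borel.

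The first step is to check that $H_m$ is locally finite, indeed of bounded degree. Given $y$, we must bound the number of $z$ with $y=\PhiA_c(w)$ and $z=\PhiA_{c'}(w)$ as above. There are boundedly many $c$ with $\lVert c\rVert_\infty\le m$. For each such $c$, the number of $w$ with $\PhiA_c(w)=y$ is bounded because the $f_i$ are bounded-to-1. Each $w$ then gives boundedly many $z$. Here we use that $\min(c,c')=0$ and $\lVert c-c'\rVert_\infty\le m$ force $\lVert c\rVert_\infty,\lVert c'\rVert_\infty\le m$.

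The second step applies \cite[Proposition 4.2]{KST1999} to $H_m$. A Borel locally finite graph has a Borel maximal independent set, and this $A\subseteq F$ will be our set.

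Property (i) follows directly from independence. If $a\neq b$, $\PhiA_a(x),\PhiA_b(x)\in A$ and $\lVert a-b\rVert_\infty\le m$, then these two points are distinct because $x\in F$. They are then $H_m$-adjacent, which is impossible for an independent set.

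Property (ii) comes from maximality. Let $x\in F$ and set $e=(m,\dots,m)$ and $y=\PhiA_e(x)$. By maximality, either $y\in A$, and we take $a=e$, or $y$ is $H_m$-adjacent to some $z\in A$. In the latter case $y=\PhiA_{a'}(u)$ and $z=\PhiA_{b'}(u)$ for some $u\in F$ with $\lVert a'-b'\rVert_\infty\le m$.

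The main obstacle is transferring this witness back to $x$: $z$ is reached from some $u$, not obviously from $x$. The adjacency only guarantees $\PhiA_{a'}(u)=\PhiA_e(x)$, and since the functions are not injective we cannot cancel.

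My plan is to strengthen the definition of $H_m$ so the witness is always relative to a common point. I will declare $y\sim z$ only when $y=\PhiA_e(x)$ and $z=\PhiA_{e+c}(x)$ for the same $x$ with $\lVert c\rVert_\infty\le m$. I will then run the maximal independent set argument on the graph whose edges are pairs $\{\PhiA_a(x),\PhiA_b(x)\}$ with $\lVert a-b\rVert_\infty\le m$ and $a,b\ge e$ coordinatewise. This is a simplification; with this choice, maximality applied to $\PhiA_e(x)$ yields $z=\PhiA_{e+c}(x')$ for some $x'$ with $\PhiA_e(x')=\PhiA_e(x)$.

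To get a genuine forward point of $x$, I would instead take a Borel proper colouring $\chi$ of $H_m$ with finitely many colours, again by \cite{KST1999}. I would then build $A$ greedily along the colour classes, adding colour-$k$ points that have no $H_m$-neighbour already in $A$. I would verify (ii) by a counting or pigeonhole argument over the box $\{a:\lVert a\rVert_\infty\le 2m\}$, in the style of \cite[Lemma 8.3]{CJMST}. That argument shows that if the forward box of $x$ misses $A$, then some point in it could have been added.
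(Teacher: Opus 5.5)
Your bounded-degree argument for $H_m$ and your derivation of (i) from independence are fine. However, (ii) is not established, and your final plan does not remove the obstacle you identified. Building $A$ greedily along the colour classes of a proper colouring of $H_m$, adding a colour-$k$ point when it has no $H_m$-neighbour already in $A$, is just the standard construction of a Borel maximal independent set of $H_m$. It therefore has the same defect: a point in the forward box of $x$ can be blocked by an $A$-point that lies off the forward orbit of $x$.

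This really happens. Take $d=1$, $X=\Z\times\{0,1\}$ and $f(n,i)=(n+1,0)$. This map is at most $2$-to-$1$, and $F=X$. Here $y,z$ are $H_m$-adjacent iff one equals $f^k$ of the other for some $1\le k\le m$. So $\Z\times\{1\}$ is independent, and every $(n,0)$ is adjacent to $(n-1,1)$. Give colour $1$ to $\Z\times\{1\}$ and colour $2+(n\bmod (m+1))$ to $(n,0)$; this is a proper colouring. Your procedure then outputs $A=\Z\times\{1\}$, and the forward orbit $\{(k,0):k\ge 0\}$ of $x=(0,0)$ misses $A$ entirely. So the promised pigeonhole step (``if the forward box misses $A$, some point in it could have been added'') is false. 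No counting argument can rescue a construction whose membership test consults neighbours outside the forward orbit.

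The missing idea is to make membership of a point in $A$ depend only on its forward images. The paper takes a finite Borel proper colouring $c'$ of the distance-$dm$ graph on $F$. It puts $y\in A$ iff the fixed local greedy rule on $\Z^d$ (radius $L=Mm$) accepts the virtual colouring $v\mapsto c'(\PhiA_{\mathbf R+v}(y))$, $v\in[-L,L]^d$, with $\mathbf R=(2L,\ldots,2L)$. The shift by $\mathbf R$ makes every consulted point a forward image of $y$.

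Now fix $x\in F$; freeness identifies its forward orbit with $\N^d$. For $a\in\N^d$, $\PhiA_a(x)\in A$ iff $a\in C_x$, where $C_x$ is the greedy output on $D_R=\{u\in\Z^d:u_i\ge -R\}$ for the colouring $u\mapsto c'(\PhiA_{\mathbf R+u}(x))$. Since $C_x$ is $m$-separated, (i) follows. Since $C_x$ is $m$-covering in $D_R$, covering the point $(m,\ldots,m)$ gives some $v\in C_x$ with $0\le v_i\le 2m$, which is (ii). Points off the forward orbit of $x$ never enter the decision, and that is exactly what your independent-set formulation cannot guarantee.
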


\begin{proof}
Since the functions are bounded-to-one, the graph
$G\upharpoonright F$ has bounded degree.  Let $G_m$ be the Borel graph
on $F$ in which two distinct points are adjacent when their
$G$-distance is at most $dm$.  The graph $G_m$ also has bounded
degree, so by \cite{KST1999} it admits a Borel proper $M$-coloring $c'\colon F\longrightarrow\{1,\ldots,M\}$ for some finite $M$.

We first describe the local greedy algorithm on $\mathbb Z^d$ that will be used later.  Let $D\subseteq\mathbb Z^d$, and suppose that
$\kappa\colon D\to\{1,\ldots,M\}$ has the property that
\[
0<\lVert u-v\rVert_\infty\leq m
\quad\Longrightarrow\quad
\kappa(u)\neq\kappa(v).
\]
Starting with $C^0=\varnothing$, define, for $1\leq i\leq M$,
\[
C^i
=
C^{i-1}
\cup
\left\{
u\in D:
\kappa(u)=i
\text{ and }
\dist_\infty(u,C^{i-1})>m
\right\}.
\]
The set $C^M$ is $m$-separated.  It is also $m$-covering in $D$:
if a point of color $i$ was not added at stage $i$, then it was
within distance $m$ of $C^{i-1}$.  Moreover, whether
$u\in C^i$ is determined by the colored
$\ell^\infty$-ball of radius $im$ about $u$.  Consequently, if
$
L=Mm,
$
there is a fixed finite rule $\mathcal G$ which decides whether
$u\in C^M$ from the restriction of $\kappa$ to
$u+[-L,L]^d$, whenever this box is contained in $D$.

Put
\[
R=2L,
\qquad
\mathbf R=(R,\ldots,R)\in\mathbb N^d,
\]
and define the shifted coloring
\[
c(x)=c'(\PhiA_{\mathbf R}(x)).
\]
This coloring is still $m$-separated on every forward orbit.  Indeed,
if $a,b\in\mathbb N^d$ are distinct and
$\lVert a-b\rVert_\infty\leq m$, then freeness implies that
$\PhiA_{\mathbf R+a}(x)$ and $\PhiA_{\mathbf R+b}(x)$ are distinct,
while
\[
d_G\bigl(\PhiA_{\mathbf R+a}(x),
          \PhiA_{\mathbf R+b}(x)\bigr)
\leq
\lVert a-b\rVert_1
\leq dm.
\]
Hence these two points have different $c'$-colors.

The shift also makes the coloring locally constant on fibers.  More
precisely, if $p,q\in\mathbb N^d$, $p\leq\mathbf R$
coordinatewise, and
$
\PhiA_p(y)=\PhiA_q(x),
$
then
\begin{equation}\label{eq:local-fiber-color}
c(y)
=
c'\bigl(\PhiA_{\mathbf R-p+q}(x)\bigr).
\end{equation}
Thus, inside an inverse neighborhood of radius at most $L$, the color
of a local inverse image depends only on its signed
$\mathbb Z^d$-coordinate, and not on which inverse image represents
that coordinate.

Now we run the greedy algorithm on $c$. Because the coloring is locally constant on fibers and the algorithm depends only on $[-L,L]^d$-neighborhood, we can see the graph as the Cayley graph of $\mathbb Z^d$ locally.

For each $x\in F$ and $v\in[-L,L]^d\cap\mathbb Z^d$, define its virtual
local color by
\[
c_x(v)
=
c'\bigl(\PhiA_{\mathbf R+v}(x)\bigr).
\]
The exponent vector $\mathbf R+v$ is nonnegative because $R\geq L$.
Formula \eqref{eq:local-fiber-color} says that this is exactly the
color seen at coordinate $v$ in any local inverse chart in which
that coordinate is represented.  We now define, uniformly,
\[
A
=
\left\{
x\in F:
\mathcal G\bigl((c_x(v))_{v\in[-L,L]^d}\bigr)=1
\right\}.
\]
This set is Borel, since its membership test uses only finitely many
Borel images of $x$.

It remains to verify the two required properties.  Fix $x\in F$ and
put
\[
D_R
=
\{u\in\mathbb Z^d:u_i\geq-R\text{ for every }i\}.
\]
Define
\[
\kappa_x(u)
=
c'\bigl(\PhiA_{\mathbf R+u}(x)\bigr),
\qquad u\in D_R,
\]
and run the preceding greedy algorithm on $(D_R,\kappa_x)$; denote
its output by $C_x$.  The color classes of $\kappa_x$ are
$m$-separated: if
$0<\lVert u-v\rVert_\infty\leq m$, then the corresponding points in
the forward orbit of $x$ are distinct by freeness and are at
$G$-distance at most $dm$.

For every $a\in\mathbb N^d$, the box
$a+[-L,L]^d$ is contained in $D_R$, and
\[
c_{\PhiA_a(x)}(v)
=
c'\bigl(\PhiA_{\mathbf R+a+v}(x)\bigr)
=
\kappa_x(a+v).
\]
By locality and translation invariance of the greedy rule,
\begin{equation}\label{eq:uniform-greedy}
\PhiA_a(x)\in A
\quad\Longleftrightarrow\quad
a\in C_x.
\end{equation}

If $\PhiA_a(x),\PhiA_b(x)\in A$, then
$a,b\in C_x$ by \eqref{eq:uniform-greedy}.  Since $C_x$ is
$m$-separated, this proves (i).

For (ii), in fact fix any $a\in\mathbb N^d$ and let
\[
\mathbf m=(m,\ldots,m).
\]
Since $C_x$ is $m$-covering in $D_R$, there is $v\in C_x$ such that
\[
\lVert v-(a+\mathbf m)\rVert_\infty\leq m.
\]
It follows coordinatewise that
\[
a_i\leq v_i\leq a_i+2m.
\]
In particular, $v\in\mathbb N^d$, and
\eqref{eq:uniform-greedy} gives $\PhiA_v(x)\in A$.  Taking $a=0$
proves (ii), while the same argument shows the stronger forward
$2m$-covering property from every point of the forward orbit.
\end{proof}

\begin{proof}[Proof of Theorem~1.1]
Restricted on the Borel set $F$, and write $\rho$ for the path
metric of $G\upharpoonright F$.  Fix $r>0$ and put
$m=\max\{1,\lceil r\rceil\}$.  Apply
Lemma~\ref{lem:forward-markers} at scale $m$, we have a Borel set
$A\subseteq F$.

For each $x\in F$, let $a(x)$ be the lexicographically least vector
$a\in\{0,\ldots,2m\}^d$ such that
$\PhiA_a(x)\in A$.  This is a Borel function.  Put
\[
\tau=(m,\ldots,m)
\]
and define the Borel map $\pi:F\to A$ by
\[
\pi(x)
=
\PhiA_{a(\PhiA_\tau(x))}(\PhiA_\tau(x)).
\]
We define $E$ to be an equivalence relation by
\[
x\,E\,y
\quad\Longleftrightarrow\quad
\pi(x)=\pi(y).
\]
For every $x$,
\[
\rho(x,\pi(x))
\leq
\abs{\tau}+\abs{a(\PhiA_\tau(x))}
\leq
dm+2dm=3dm.
\]
Thus every $E$-class has $\rho$-diameter at most $6dm$.

It remains to bound uniformly the number of $E$-classes meeting an
$r$-ball.  Fix $x\in F$ and let $y\in B_\rho(x,r)$.  By
Lemma~\ref{lem:common} (it is also true for free part), choose $p,q\in\N^d$ such that
\[
\abs{p}+\abs{q}\leq m
\quad\text{and}\quad
\PhiA_p(x)=\PhiA_q(y).
\]
Since every coordinate of $q$ is at most $m$, the vector
$\tau-q$ is nonnegative, and hence
\[
\PhiA_\tau(y)
=
\PhiA_{\tau-q}(\PhiA_q(y))
=
\PhiA_{\tau-q+p}(x).
\]
Write $b=a(\PhiA_\tau(y))$, we have
\[
\pi(y)=\PhiA_{\tau-q+p+b}(x).
\]
Every coordinate of $\tau-q+p+b$ lies between $0$ and $4m$.
Moreover, $\pi(y)\in A$.  Therefore all values of $\pi$ on
$B_\rho(x,r)$ lie in
\[
A\cap
\{\PhiA_v(x):v\in\{0,\ldots,4m\}^d\}.
\]

Partition $\{0,\ldots,4m\}$ into five intervals, each of diameter at
most $m$, and take the resulting product partition of
$\{0,\ldots,4m\}^d$ into at most $5^d$ boxes.  By
Lemma~\ref{lem:forward-markers}(i), two distinct exponent vectors
corresponding to points of $A$ cannot lie in the same box.  Hence
$\pi$ takes at most $5^d$ values on $B_\rho(x,r)$.  Equivalently, the
$r$-ball meets at most $5^d$ many $E$-classes.

The relation $E$ therefore witnesses
\[
\asdimB(G\upharpoonright F)\leq 5^d-1<\infty.
\]

\end{proof}

Now we prove Theorem 3.1.

\begin{lemma}\label{lem:coarse-transfer}
Let $\rho$ and $\sigma$ be the Borel extended metrics on a standard Borel
space $X$, and suppose their finite-distance relations are countable.
Assume that:
\begin{enumerate}[label=(\roman*)]
\item $E_\sigma\subseteq E_\rho$, and every $E_\rho$-class contains at
most $M$ many $E_\sigma$-classes;
\item there is $A<\infty$ such that
$\rho(x,y)\leq A\sigma(x,y)$ whenever $\sigma(x,y)<\infty$;
\item for every $R>0$ there is $\theta(R)<\infty$ such that
\[
\rho(x,y)\leq R\text{ and }\sigma(x,y)<\infty
\quad\Longrightarrow\quad
\sigma(x,y)\leq\theta(R).
\]
\end{enumerate}
If $\asdimB(X,\sigma)\leq n$, then
\[
\asdimB(X,\rho)\leq M(n+1)-1.
\]
\end{lemma}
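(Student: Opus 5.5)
Fix $r>0$. The plan is to use the same Borel equivalence relation that witnesses $\asdimB(X,\sigma)\leq n$ at a suitably enlarged scale, and then check both conditions of the definition for $\rho$.

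\emph{Choice of scale.} Let $r'=\theta(r)$ from hypothesis (iii). By assumption there is a Borel equivalence relation $F$ such that:
\begin{itemize}
\item every $F$-class has $\sigma$-diameter at most some $D<\infty$;
\item every $\sigma$-ball of radius $r'$ meets at most $n+1$ $F$-classes.
\end{itemize}
We may assume $F\subseteq E_\sigma$: finite $\sigma$-diameter already forces this, provided we read bounded diameter as finite distance between any two points of a class. We take $F$ itself as the witness for $\rho$.

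\emph{Bounded $\rho$-diameter.} Let $x\,F\,y$. Then $\sigma(x,y)\leq D<\infty$, so hypothesis (ii) gives $\rho(x,y)\leq AD$. Hence every $F$-class has $\rho$-diameter at most $AD$.

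\emph{Counting classes in a $\rho$-ball.} Fix $x$ and let $B=B_\rho(x,r)$. Every point of $B$ lies in the $E_\rho$-class of $x$. By hypothesis (i), $B$ therefore meets at most $M$ distinct $E_\sigma$-classes. For each such class choose a representative $z_j\in B$, for $j\leq M$.

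Now take any $y\in B$ that is $E_\sigma$-equivalent to $z_j$. By the triangle inequality, $\rho(z_j,y)\leq 2r$, and $\sigma(z_j,y)<\infty$. Hypothesis (iii) then bounds $\sigma(z_j,y)$.

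There is a slip to repair here. To apply (iii) with radius $r$, we should center at $x$ rather than at $z_j$. For the $E_\sigma$-class containing $x$ itself, (iii) gives $\sigma(x,y)\leq\theta(r)$ directly. For the other classes we need a center within distance $r$, and the cleanest fix is simply to set $r'=\theta(2r)$ and use $z_j$ as the center. Then
\[
B\cap[z_j]_{E_\sigma}\subseteq B_\sigma\bigl(z_j,\theta(2r)\bigr),
\]
and this $\sigma$-ball meets at most $n+1$ $F$-classes.

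Summing over the at most $M$ choices of $j$, the ball $B$ meets at most $M(n+1)$ $F$-classes. This proves $\asdimB(X,\rho)\leq M(n+1)-1$.

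\emph{Main obstacle.} The only delicate point is this bookkeeping: correctly choosing $\theta(2r)$ and the per-class centers. Without hypothesis (i), a $\rho$-ball could meet unboundedly many $E_\sigma$-classes. No Borel selection of the centers $z_j$ is needed, since the centers enter only the counting argument and not the construction of $F$.
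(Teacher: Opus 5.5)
Your proof is correct and follows essentially the paper's route: take the $\sigma$-witness $F$ at scale $\theta(2r)$, bound the $\rho$-diameter of $F$-classes by $AD$ via (ii), and note that within $B_\rho(x,r)$ each of the at most $M$ $E_\sigma$-classes lies in a $\sigma$-ball of radius $\theta(2r)$ about a (non-Borel) representative, which gives $M(n+1)$. The only differences are cosmetic: delete the abandoned first choice $r'=\theta(r)$, and, as the paper does with $\max\{1,\theta(2r)\}$, make sure the scale is positive, since the definition only applies at scales $r>0$ and $\theta$ may always be enlarged.
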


\begin{proof}
Fix $r>0$, put $s=\max\{1,\theta(2r)\}$, and apply the definition of Borel
asymptotic dimension for $\sigma$ at scale $s$.  Thus there is a Borel
equivalence relation $F$ whose classes have $\sigma$-diameter at most some
$D<\infty$, and every $\sigma$-ball of radius $s$ meets at most $n+1$ many
$F$-classes.  By (ii), every $F$-class has
$\rho$-diameter at most $AD$.

A $\rho$-ball $B_\rho(x,r)$ meets at most $M$ many
$E_\sigma$-classes.  For each such $E_\sigma$-class $C$ meeting the ball, choose 
$y_C\in C\cap B_\rho(x,r)$ (it does not have to a Borel choice).  If
$z\in C\cap B_\rho(x,r)$, then $\rho(y_C,z)\leq 2r$ and
$\sigma(y_C,z)<\infty$, so (iii) gives
$\sigma(y_C,z)\leq\theta(2r)$.  Consequently
$C\cap B_\rho(x,r)$ meets at most $n+1$ many $F$-classes.  Hence the
whole $\rho$-ball meets at most $M(n+1)$ many $F$-classes.  The same
$F$ therefore witnesses the bound of Borel asymptotic dimension for $\rho$.
\end{proof}

\begin{lemma}\label{lem:cofinal-submonoid}
Let $P$ be a finitely generated cancellative commutative monoid, let
$K=\operatorname{Gp}(P)$ be its Grothendieck group, and suppose that
$r=\operatorname{rank}(K)>0$.  Then there are elements
$q_1,\ldots,q_r\in P$ such that:
\begin{enumerate}[label=(\roman*)]
\item $q_1,\ldots,q_r$ are $\mathbb Z$-linearly independent;
\item $L=\mathbb Zq_1+\cdots+\mathbb Zq_r$ has finite index in $K$;
\item putting $Q=\mathbb Nq_1+\cdots+\mathbb Nq_r$, for every finite
set $A\subseteq P$ contained in one coset of $L$ there is $c\in P$
such that $A+c\subseteq Q$.
\end{enumerate}
In particular, $Q$ is a free commutative monoid of rank $r$.
\end{lemma}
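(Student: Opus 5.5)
We choose the $q_i$ inside $P$ so that they are also cofinal in $P$. Let $p_1,\ldots,p_k$ generate $P$, and identify $P$ with its image under $\iota$ in $K=\operatorname{Gp}(P)$, which is generated as a group by the $p_j$.

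\textbf{Choosing the $q_i$.} Put $s=p_1+\cdots+p_k\in P$. The images of $p_1,\ldots,p_k$ in $K\otimes\mathbb Q\cong\mathbb Q^r$ span that space, so some $r$ of them are linearly independent, say $p_{j_1},\ldots,p_{j_r}$. Set
\[
q_i=p_{j_i}+s\qquad(1\leq i\leq r).
\]
The images of the $q_i$ are independent for a suitable choice. One way to guarantee this is to replace $s$ by $Ns$ for a large integer $N$; then $\det(p_{j_i}+Ns)$ is a polynomial in $N$ of degree at most $1$ with nonzero constant term, so it is nonzero for all but one value of $N$. So we may take $q_i=p_{j_i}+Ns$ with $N\geq1$ chosen so that the $q_i$ are $\mathbb Q$-independent, hence $\mathbb Z$-independent. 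This gives (i). Since $K$ has rank $r$, any $r$ independent elements generate a subgroup of finite index, which gives (ii).

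\textbf{Proving (iii).} Let $t=q_1+\cdots+q_r\in Q$; note $t$ contains each $p_j$ with a positive coefficient, coming from $s$.
\begin{enumerate}[label=(\alph*)]
\item Let $A=\{a_1,\ldots,a_\ell\}$ lie in one coset $a_1+L$.
\item Since $K/L$ is finite, there is $e\geq1$ with $eK\subseteq L$. Put $c_0=(e-1)a_1\in P$, so that $a_1+c_0=ea_1\in L$. Because $A\subseteq a_1+L$, we get $a+c_0\in L$ for every $a\in A$.
\item Write $a+c_0=\sum_i n_i^{(a)}q_i$ with $n_i^{(a)}\in\mathbb Z$, and let $n$ be the largest absolute value of all these coefficients as $a$ ranges over $A$.
\item Take $c=c_0+nt\in P$. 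Then $a+c=\sum_i\bigl(n_i^{(a)}+n\bigr)q_i$ has all coefficients nonnegative, so $a+c\in Q$.
\end{enumerate}
This last step does not use the cofinality of $t$ at all, only that $t\in Q$.

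\textbf{Freeness of $Q$.} $Q$ is free commutative of rank $r$ because the $q_i$ are $\mathbb Z$-independent in $K$ and $\iota$ is injective, $P$ being cancellative.

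The main obstacle is only the independence in (i) under the shift. The $Ns$ trick handles it, and we may even drop $s$ and take $q_i=p_{j_i}$ directly, since (iii) was shown without cofinality.
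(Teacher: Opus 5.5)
Your proof is correct and is essentially the paper's argument: once you drop the superfluous $Ns$ shift (as you note yourself), your $q_i$ are simply elements of $P$ whose images form a $\mathbb Q$-basis of $K\otimes\mathbb Q$, and the final step of adding $n(q_1+\cdots+q_r)$ is the same as the paper's. The only difference is that your $c_0=(e-1)a_1$ is an explicit element of $P$ whose image in the finite group $K/L$ is $-\gamma$; the paper gets such an element by observing that the image of $P$ in $K/L$ is a submonoid of a finite group, hence all of $K/L$.
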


\begin{proof}
Cancellativity identifies $P$ with a submonoid of $K$.  Since $P$
generates $K$ as a group, we may choose
$q_1,\ldots,q_r\in P$ whose images form a basis of the rational vector
space $K\otimes_\mathbb Z\mathbb Q$.  They are $\mathbb Z$-linearly
independent, and $L=\sum_i\mathbb Zq_i$ has finite index in $K$.

The image of $P$ in the finite group $K/L$ is a submonoid.  Every
submonoid of a finite group is a subgroup, and this subgroup generates
$K/L$; hence the image is all of $K/L$.  Let $A\subseteq P$ be finite
and contained in the coset $\gamma\in K/L$.  Choose $c_0\in P$ whose
image is $-\gamma$.  For every $a\in A$ there are unique integers
$z_i(a)$ such that
\[
a+c_0=\sum_{i=1}^r z_i(a)q_i.
\]
Choose $N$ so large that $z_i(a)+N\geq0$ for every $a\in A$ and every
$i$, and set $c=c_0+N(q_1+\cdots+q_r)$.  Then $c\in P$ and
\[
a+c=\sum_{i=1}^r\bigl(z_i(a)+N\bigr)q_i\in Q
\]
for every $a\in A$, proving (iii).
\end{proof}

For a finite generating set $S$ of a commutative monoid $P$, let
$\ell_S(p)$ denote the least number of elements of $S$ whose sum is
$p$ (with $\ell_S(0)=0$).

\begin{lemma}\label{lem:action-metrics}
Let a finitely generated cancellative commutative monoid $P$ act
freely on a set $X$, let $S$ be a finite generating set, and let
$\Gamma$ be the graph generated by the maps
$x\mapsto s\cdot x$, $s\in S$.  Let $q_1,\ldots,q_r$, $L$, and $Q$ be
as in Lemma~\ref{lem:cofinal-submonoid}, and let $H$ be the graph generated by $x\mapsto q_i\cdot x$, $1\leq i\leq r$.  Then:
\begin{enumerate}[label=(\roman*)]
\item every $H$-component is contained in a $\Gamma$-component, and
every $\Gamma$-component contains at most $[K:L]$ many
$H$-components;
\item there is $A<\infty$ such that
$d_\Gamma(x,y)\leq A d_H(x,y)$ whenever $d_H(x,y)<\infty$;
\item for every $R>0$ there is $\theta(R)<\infty$ such that
\[
d_\Gamma(x,y)\leq R\text{ and }d_H(x,y)<\infty
\quad\Longrightarrow\quad
d_H(x,y)\leq\theta(R).
\]
\end{enumerate}
Here $K=\operatorname{Gp}(P)$ denotes the Grothendieck group of $P$.
\end{lemma}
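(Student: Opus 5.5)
The key device is a ``displacement'' cocycle. Because the action is free, for $x,y$ in the same $\Gamma$-component there is a well-defined element $\delta(x,y)\in K$: a $\Gamma$-path from $x$ to $y$ gives a zigzag $x=x_0,x_1,\dots,x_n=y$ with $x_{j+1}=s\cdot x_j$ or $x_j=s\cdot x_{j+1}$, and we add $s$ or $-s$ accordingly. To show independence of the path, I would proceed as follows. Two points joined by paths give, by commutativity, a common forward image: an inverse step can be absorbed by shifting everything forward. Concretely, I would first prove the analogue of Lemma~\ref{lem:common}: if $d_\Gamma(x,y)\le n$ then there are $p,q\in P$ with $\ell_S(p)+\ell_S(q)\le n$ and $p\cdot x=q\cdot y$. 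This goes by induction on the path, using $p\cdot(s\cdot z)=(p+s)\cdot z$. Given two such representations $p\cdot x=q\cdot y$ and $p'\cdot x=q'\cdot y$, apply $p'+q$ to the first and $p+q'$ to the second to get $(p+p'+q)\cdot x$ versus $(p+p'+q')\cdot x$ meeting at related points. More cleanly, $(p+q')\cdot(p'\cdot x)=(p'+q)\cdot (p\cdot x)$ after rewriting, so freeness plus cancellativity give $p-q=p'-q'$ in $K$. Hence $\delta(x,y)=q-p$ is well defined, additive along chains, and satisfies $\delta(x,y)=0\Rightarrow x=y$ by freeness.

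\textbf{(i).} Each $q_i\in P$ is a word in $S$, so an $H$-edge is a $\Gamma$-path of length $\ell_S(q_i)$. This gives the containment, and also (ii) with $A=\max_i\ell_S(q_i)$. For the count, I would show that $x,y$ in one $\Gamma$-component lie in the same $H$-component iff $\delta(x,y)\in L$.

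For the forward direction, $H$-steps change $\delta$ by $\pm q_i$.

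For the converse, write $p\cdot x=q\cdot y$ with $q-p\in L$. The set $\{p,q\}$ lies in one coset of $L$, so Lemma~\ref{lem:cofinal-submonoid}(iii) gives $c$ with $p+c,\,q+c\in Q$. Then $(p+c)\cdot x=(q+c)\cdot y$, and elements of $Q$ act by $H$-paths. So $x$ and $y$ are $H$-connected.

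Fixing a base point $x$, the map $y\mapsto \delta(x,y)+L$ is then injective on $H$-components, giving at most $[K:L]$ of them.

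\textbf{(iii).} This is the main obstacle, since it needs a uniform bound. Suppose $d_\Gamma(x,y)\le R$ and $d_H(x,y)<\infty$. Take $p,q$ with $\ell_S(p)+\ell_S(q)\le R$ and $p\cdot x=q\cdot y$; then $q-p\in L$. The finite set of pairs
\[
\mathcal F_R=\{(p,q):\ell_S(p)+\ell_S(q)\le R,\ q-p\in L\}
\]
is finite and independent of $x,y$. For each pair, Lemma~\ref{lem:cofinal-submonoid}(iii) gives some $c_{p,q}$ with $p+c_{p,q},\,q+c_{p,q}\in Q$. Write $p+c_{p,q}=\sum \alpha_iq_i$ and $q+c_{p,q}=\sum\beta_iq_i$ with $\alpha_i,\beta_i\in\N$.

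Then $x$ reaches $z=(p+c_{p,q})\cdot x$ by $\sum\alpha_i$ forward $H$-steps, and $y$ reaches the same $z$ by $\sum\beta_i$ forward $H$-steps. Hence $d_H(x,y)\le\sum_i(\alpha_i+\beta_i)$. Setting
\[
\theta(R)=\max_{(p,q)\in\mathcal F_R}\sum_i(\alpha_i+\beta_i)
\]
finishes the proof, since this quantity depends only on $R$.

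The care needed is in checking that this uniform choice of $c$ is legitimate (finiteness of $\mathcal F_R$) and that the well-definedness of $\delta$ really uses freeness of the whole action.
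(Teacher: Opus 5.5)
Your proposal follows the paper's proof essentially step for step. The shared steps are: short common-future witnesses $p\cdot x=q\cdot y$; a well-defined displacement $\delta(x,y)\in K$; the criterion that $x,y$ are $H$-connected iff $\delta(x,y)\in L$, obtained by applying Lemma~\ref{lem:cofinal-submonoid}(iii) to $\{p,q\}$; and a uniform choice of $c_{p,q}$ over the finite set of short witness pairs for (iii). There are two local slips to fix.

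First, the displayed identity in your well-definedness argument is wrong as written. By freeness and cancellation, $(p+q')\cdot(p'\cdot x)=(p'+q)\cdot(p\cdot x)$ is equivalent to $q=q'$. The correct computation is $(p+q')\cdot x=(q+q')\cdot y=(p'+q)\cdot x$. Freeness at $x$ then gives $p+q'=p'+q$, i.e.\ $p-q=p'-q'$ in $K$. The paper does the symmetric computation, comparing at $y$, and no cancellativity is needed for this step.

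Second, the aside ``$\delta(x,y)=0\Rightarrow x=y$ by freeness'' is false. Freeness only says that distinct elements of $P$ send a given point to distinct places; it does not make the maps injective, and here they are merely bounded-to-one. If $x\neq y$ are two preimages of the same point under a generator $s$, then $\delta(x,y)=s-s=0$. You never use this claim, so the argument is unaffected, but it should be deleted.

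One more cosmetic point: with your path convention (add $s$ for a forward step), a witness $p\cdot x=q\cdot y$ gives $\delta(x,y)=p-q$, not $q-p$. This sign does not matter for the proof.
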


\begin{proof}
First observe that whenever $x$ and $y$ lie in the same
$\Gamma$-component, there are $p,q\in P$ such that
\begin{equation}\label{eq:general-common-future}
p\cdot x=q\cdot y.
\end{equation}
Indeed, this follows by induction along a path.  The induction step is
the identity
\[
p\cdot x=q\cdot y,\quad u\cdot y=v\cdot z
\quad\Longrightarrow\quad
(p+u)\cdot x=(q+v)\cdot z.
\]
Moreover, if the path has length at most $m$, the witnesses may be
chosen with $\ell_S(p)+\ell_S(q)\leq m$.

Define
\[
\delta(x,y)=p-q\in K
\]
using any witnesses in \eqref{eq:general-common-future}.  This is
well-defined.  Namely, if also $p'\cdot x=q'\cdot y$, then
\[
(q+p')\cdot y=(p+p')\cdot x=(q'+p)\cdot y,
\]
and freeness at $y$ gives $q+p'=q'+p$, hence $p-q=p'-q'$ in $K$.
The same calculation, or the displayed induction step, shows that
\begin{equation}\label{eq:cocycle-additive}
\delta(x,z)=\delta(x,y)+\delta(y,z)
\end{equation}
whenever the three points lie in one $\Gamma$-component.

We claim that two points in one $\Gamma$-component are
$H$-connected exactly when their $\delta$-value belongs to $L$.  One
direction follows from \eqref{eq:cocycle-additive}, since an oriented
$H$-edge has $\delta$-value $q_i$ or $-q_i$.  Conversely, suppose that
$p\cdot x=q\cdot y$ and $p-q\in L$.  Then $p$ and $q$ lie in the same
coset of $L$.  Lemma~\ref{lem:cofinal-submonoid}, applied to
$\{p,q\}$, gives $c\in P$ such that $p+c,q+c\in Q$.  Applying $c$ to
the equality gives
\[
(p+c)\cdot x=(q+c)\cdot y.
\]
Both sides are reached using only the $q_i$, so the undirected graph
$H$ connects $x$ to $y$.

Fixing one point $x$ in a $\Gamma$-component, the map which assigns to
an $H$-component the coset $\delta(x,y)+L$ of any point $y$ in that
component is therefore well-defined and injective.  This proves the
component bound in (i).  The containment of components follows also
from the fact that every $q_i$ is a sum of elements of $S$.

Set
\[
A=\max_{1\leq i\leq r}\ell_S(q_i).
\]
Every $H$-edge can be replaced by a $\Gamma$-path of length at most
$A$, which proves (ii).

Finally fix $R>0$, put $m=\lceil R\rceil$, and consider the finite set
\[
\mathcal W_m=
\{(p,q)\in P^2:\ell_S(p)+\ell_S(q)\leq m,\ p-q\in L\}.
\]
For each $(p,q)\in\mathcal W_m$, use
Lemma~\ref{lem:cofinal-submonoid} to choose $c_{p,q}\in P$ with
$p+c_{p,q},q+c_{p,q}\in Q$, and write
\[
p+c_{p,q}=\sum_i a_i(p,q)q_i,
\qquad
q+c_{p,q}=\sum_i b_i(p,q)q_i
\]
with nonnegative integer coefficients.  Let
\[
\theta(R)=
\max_{(p,q)\in\mathcal W_m}
\sum_i\bigl(a_i(p,q)+b_i(p,q)\bigr).
\]
If $d_\Gamma(x,y)\leq R$ and $x,y$ are $H$-connected, the first
paragraph supplies $(p,q)\in\mathcal W_m$ with $p\cdot x=q\cdot y$.
After applying $c_{p,q}$, the displayed $Q$-coordinates give an
$H$-path from $x$ to $y$ of length at most $\theta(R)$.  This proves
(iii).
\end{proof}

\begin{proof}[Proof of Theorem~3.1]
Let $S$ be the fixed finite generating
set, let $K$ be the Grothendieck group of $P$, and let $\Gamma$ be the
graph in the statement.

First suppose that $\operatorname{rank}(K)=0$.  Then $K$ is finite.
The image of $P$ in $K$ is a submonoid of a finite group, hence a
subgroup; since it generates $K$, it equals $K$.  Thus $P$ is a finite
group.  Every map in the action is then a bijection, and every
$\Gamma$-component is a $P$-orbit of cardinality at most $\lvert
P\rvert$.  The connectedness relation of $\Gamma$ is a finite Borel
equivalence relation, so
$\asdimB(\Gamma)=0$.

Now suppose that $r=\operatorname{rank}(K)>0$.  Choose
$q_1,\ldots,q_r\in P$ and put
\[
L=\mathbb Zq_1+\cdots+\mathbb Zq_r,
\qquad
Q=\mathbb Nq_1+\cdots+\mathbb Nq_r
\]
as in Lemma~\ref{lem:cofinal-submonoid}.  Let $H$ be the undirected
Borel graph generated by the maps
\[
x\longmapsto q_i\cdot x,
\qquad 1\leq i\leq r.
\]
Each of these maps is a composition of maps coming from $S$, and is
therefore Borel and bounded-to-one.  They commute.  Moreover, the
resulting $\mathbb N^r$-action is free: if
\[
\Bigl(\sum_i a_iq_i\Bigr)\cdot x
=
\Bigl(\sum_i b_iq_i\Bigr)\cdot x,
\]
then freeness of the $P$-action gives
$\sum_i a_iq_i=\sum_i b_iq_i$ in $P$, and the
$\mathbb Z$-linear independence of the $q_i$ gives $a_i=b_i$ for all
$i$.  Thus all of $X$ is the free part for these $r$ commuting maps.
Theorem~1.1 therefore gives
\[
\asdimB(H)<\infty.
\]

Both $\Gamma$ and $H$ are locally finite Borel graphs, because their
finite generating families consist of bounded-to-one Borel maps.
Lemma~\ref{lem:action-metrics} says that their path metrics satisfy all
three hypotheses of Lemma~\ref{lem:coarse-transfer}, with
$M=[K:L]$.  Applying that lemma to
$\rho=d_\Gamma$ and $\sigma=d_H$ now yields
\[
\asdimB(\Gamma)<\infty,
\]
as required.
\end{proof}
\section{Proof of Theorem 1.2}

\begin{lemma}[Common future lemma]\label{lem:common}
For all $x,y\in X$,
\[
d_G(x,y)
=
\min\bigl\{\abs{a}+\abs{b}:
a,b\in\N^d,\ \PhiA_a(x)=\PhiA_b(y)\bigr\},
\]
with the minimum interpreted as $\infty$ if the set is empty.
\end{lemma}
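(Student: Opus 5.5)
We prove the two inequalities separately. Let $\mu(x,y)$ denote the minimum on the right-hand side, with $\mu(x,y)=\infty$ if no pair $a,b$ exists.

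\emph{Step 1: $d_G(x,y)\leq \mu(x,y)$.} Suppose $\PhiA_a(x)=\PhiA_b(y)$. Apply the generators one at a time, in any fixed order, to pass from $x$ to $\PhiA_a(x)$. This gives a walk of length $\abs{a}$ in which each step is either an edge of $G$ or a repeated point; the latter happens when $f_i(z)=z$, since loops are not edges. Do the same from $y$ to $\PhiA_b(y)$, which gives a walk of length $\abs{b}$. Concatenating the first walk with the reverse of the second gives a walk from $x$ to $y$ with at most $\abs{a}+\abs{b}$ genuine edges. Hence $d_G(x,y)\leq\abs{a}+\abs{b}$. If $\mu(x,y)=\infty$ there is nothing to prove.

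\emph{Step 2: $\mu(x,y)\leq d_G(x,y)$.} We may assume $d_G(x,y)=n<\infty$, and we induct on $n$, showing there are $a,b$ with $\PhiA_a(x)=\PhiA_b(y)$ and $\abs{a}+\abs{b}\leq n$. For $n=0$ take $a=b=0$. For the inductive step, take a geodesic $x=z_0,z_1,\dots,z_n=y$. By induction there are $a',b'$ with $\PhiA_{a'}(x)=\PhiA_{b'}(z_{n-1})$ and $\abs{a'}+\abs{b'}\leq n-1$. The last edge satisfies either $f_i(z_{n-1})=y$ or $f_i(y)=z_{n-1}$ for some $i$; write $e_i$ for the $i$-th unit vector.

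In the first case, set $a=a'+e_i$ and $b=b'$. Commutativity gives
\[
\PhiA_{a'+e_i}(x)=f_i\bigl(\PhiA_{b'}(z_{n-1})\bigr)=\PhiA_{b'}\bigl(f_i(z_{n-1})\bigr)=\PhiA_{b'}(y).
\]
In the second case, set $a=a'$ and $b=b'+e_i$. Then
\[
\PhiA_{b'+e_i}(y)=\PhiA_{b'}\bigl(f_i(y)\bigr)=\PhiA_{b'}(z_{n-1})=\PhiA_{a'}(x).
\]
In both cases $\abs{a}+\abs{b}\leq n$. Equivalently, one can set up the induction directly on the general step $p\cdot x=q\cdot y,\ u\cdot y=v\cdot z\Rightarrow(p+u)\cdot x=(q+v)\cdot z$ used in Lemma~\ref{lem:action-metrics}.

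\emph{Conclusion.} Combining the two steps gives equality whenever $d_G(x,y)<\infty$. If $d_G(x,y)=\infty$, Step 1 forces the set of pairs $(a,b)$ to be empty, so $\mu(x,y)=\infty$ as well.

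There is no serious obstacle. The only points needing care are that self-loops can shorten the actual path in Step 1, which only helps the inequality, and that the inductive step uses commutativity to move $f_i$ past $\PhiA_{b'}$. Neither freeness nor bounded-to-one is used, so the lemma holds on all of $X$, in particular on $F$, as invoked in the proof of Theorem~1.1.
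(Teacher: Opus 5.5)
Your proof is correct and follows the paper's argument. For the upper bound, both walk forward from $x$ and backward from $y$ to the common future; for the lower bound, both compose the single-edge relations along a path using commutativity, which is exactly your induction along a geodesic. Your explicit handling of fixed points $f_i(z)=z$ in Step 1 matches the remark the paper makes right after the lemma.
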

\begin{proof}
The equation $\PhiA_a(x)=\PhiA_b(y)$ gives a path of length at most
$\abs{a}+\abs{b}$: move forward from $x$ to the common future, and
then move from the common future to $y$ backwards. So
\[
d_G(x,y)
\leq
\min\bigl\{\abs{a}+\abs{b}:
a,b\in\N^d,\ \PhiA_a(x)=\PhiA_b(y)\bigr\}.
\]

Conversely, suppose $x=x_0,x_1,\cdots,x_n=y$ is a path in $G$ and 
$$\PhiA_{a_i}(x_i)=\PhiA_{b_i}(x_{i+1})\quad\text{where }(|a_i|,|b_i|)=(0,1)\text{ or }(1,0)$$
Note that if
\[
\PhiA_a(x)=\PhiA_b(y)
\quad\text{and}\quad
\PhiA_c(y)=\PhiA_e(z),
\]
then commutativity gives
\begin{equation}
\PhiA_{a+c}(x)
=
\PhiA_{b+c}(y)
=
\PhiA_{b+e}(z).\label{eq:commute}
\end{equation}
So
$$\PhiA_{\Sigma a_i}(x)=\PhiA_{\Sigma b_i}(y).$$
This shows
\[
d_G(x,y)
\geq
\min\bigl\{\abs{a}+\abs{b}:
a,b\in\N^d,\ \PhiA_a(x)=\PhiA_b(y)\bigr\}.
\]
\end{proof}
We remark that there could be $f_i(x)=x$ for some vertex, in this case $x$ is the fixed point of $f_i$, one can verify that it does not affect the above proof.
\begin{lemma}\label{cor:intrinsic}
If $A\subseteq X$ is forward invariant along any $f_i$, then for
$x,y\in A$,
\[
d_{G\upharpoonright A}(x,y)=d_G(x,y).
\]
\end{lemma}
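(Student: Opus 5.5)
The plan is to prove the two inequalities separately. Both will come from the common future lemma (Lemma~\ref{lem:common}), applied once to $G$ and once to $G\upharpoonright A$.

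The inequality $d_G(x,y)\leq d_{G\upharpoonright A}(x,y)$ is immediate. Every path in $G\upharpoonright A$ is also a path in $G$.

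For the reverse inequality we may assume $d_G(x,y)=n<\infty$; otherwise there is nothing to prove. By Lemma~\ref{lem:common} there are $a,b\in\N^d$ with $\abs{a}+\abs{b}=n$ and $\PhiA_a(x)=\PhiA_b(y)$. We then write down the path used in the first half of the proof of Lemma~\ref{lem:common}. Starting from $x$, we apply the generators $f_i$ one at a time, $a_i$ times each, in some fixed order. This produces a forward path $x=u_0,u_1,\ldots,u_{\abs{a}}=\PhiA_a(x)$ in which each $u_{j+1}=f_{i_j}(u_j)$. Similarly we get a forward path from $y$ to $\PhiA_b(y)$ of length $\abs{b}$. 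Concatenating the first path with the reverse of the second gives a walk of length $n$ from $x$ to $y$.

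The key point is that every vertex of this walk lies in $A$. Each vertex has the form $g(x)$ or $g(y)$, where $g$ is a composition of the maps $f_i$. Since $x,y\in A$ and $f_i(A)\subseteq A$ for every $i$, induction on the length of the composition shows every vertex is in $A$.

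The one subtlety is that consecutive vertices may coincide when some $f_i$ has a fixed point, as in the remark after Lemma~\ref{lem:common}. We handle this by deleting repeated consecutive vertices. This only shortens the walk, and every remaining consecutive pair is a genuine edge of $G$ with both endpoints in $A$, hence an edge of $G\upharpoonright A$. Therefore $d_{G\upharpoonright A}(x,y)\leq n=d_G(x,y)$.

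No step here is a real obstacle. The only care needed is to note that the minimizing path in Lemma~\ref{lem:common} travels only forward from $x$ and from $y$, never backward from a point of $A$. Backward steps could leave $A$, since $A$ is only assumed forward invariant, and that is exactly where the hypothesis on $A$ is used.
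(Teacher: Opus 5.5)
Your proof is correct and follows essentially the same route as the paper's. The inequality $d_G\leq d_{G\upharpoonright A}$ is immediate. Lemma~\ref{lem:common} gives $\PhiA_a(x)=\PhiA_b(y)$ with $\abs{a}+\abs{b}=d_G(x,y)$, forward invariance keeps both forward paths inside $A$, and deleting stationary steps yields a path in $G\upharpoonright A$ of length at most $d_G(x,y)$. One small point: your overview says Lemma~\ref{lem:common} is also applied to $G\upharpoonright A$, but the argument you actually give never uses this and does not need it.
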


\begin{proof}
$d_{G\upharpoonright A}(x,y)\geq d_G(x,y)$ is trivial. Conversely, by Lemma \ref{lem:common} we choose $a,b$ such that $d_G(x,y)=|a|+|b|$ and $\PhiA_a(x)=\PhiA_b(y)$, since $A$ is forward invariant, both forward paths stay in $A$. After deleting the stationary steps, this gives a path in $G\upharpoonright A$ of length at most $|a|+|b|=d_G(x,y)$.
\end{proof}

\begin{lemma}\label{lem:subgroup}
We define 
\[
\Lambda(x)
=
\{a-b\in\Z^d:\PhiA_a(x)=\PhiA_b(x)
\text{ for some }a,b\in\N^d\}.
\]For every $x\in X$, the set $\Lambda(x)$ is a subgroup of $\Z^d$.
\end{lemma}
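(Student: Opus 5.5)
We check the three subgroup axioms directly, using only commutativity of the $f_i$ and the rule $\PhiA_a\PhiA_c=\PhiA_{a+c}$.

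\emph{Identity and inverses.} Taking $a=b=0$ gives $0\in\Lambda(x)$. If $a-b\in\Lambda(x)$ is witnessed by $\PhiA_a(x)=\PhiA_b(x)$, then the same equation read the other way round witnesses $b-a\in\Lambda(x)$.

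\emph{Closure under addition.} This is the only step needing an idea, and it is the same trick as equation \eqref{eq:commute} in the proof of Lemma~\ref{lem:common}. Suppose
\[
\PhiA_a(x)=\PhiA_b(x)
\quad\text{and}\quad
\PhiA_c(x)=\PhiA_e(x).
\]
Apply $\PhiA_c$ to the first equation and use commutativity. This gives $\PhiA_{a+c}(x)=\PhiA_{b+c}(x)=\PhiA_b(\PhiA_c(x))$. Substituting the second equation then gives $\PhiA_{b+c}(x)=\PhiA_b(\PhiA_e(x))=\PhiA_{b+e}(x)$. Hence
\[
\PhiA_{a+c}(x)=\PhiA_{b+e}(x),
\]
so $(a-b)+(c-e)=(a+c)-(b+e)\in\Lambda(x)$.

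\emph{Possible subtlety.} One might worry that the same vector $a-b$ is represented by several pairs $(a,b)$. This causes no difficulty: $\Lambda(x)$ is defined existentially, so membership only requires some witnessing pair, and the argument never needs well-definedness of a representative. Also, no freeness or bounded-to-one hypothesis is used. The only care needed is to apply $\PhiA_c$ on the left, which commutativity permits, rather than treating $\PhiA$ as an injective map. I do not expect a real obstacle.
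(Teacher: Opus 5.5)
Your proof is correct and is essentially the paper's argument. The paper only states that $0\in\Lambda(x)$ and closure under negation and addition follow routinely from the identity \eqref{eq:commute}; your addition step is exactly that identity with $y=z=x$, written out in full.
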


\begin{proof}
It is routine to check that it contains $0$ and is closed under addition and negation using formula (\ref{eq:commute}).
\end{proof}

\begin{lemma}\label{lem:invariant}
If $y=f_i(x)$, then
\[
\Lambda(y)=\Lambda(x).
\]
Consequently $x\mapsto\Lambda(x)$ is constant on every $G$-component.
\end{lemma}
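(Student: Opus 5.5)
We prove the two inclusions $\Lambda(x)\subseteq\Lambda(y)$ and $\Lambda(y)\subseteq\Lambda(x)$ directly, using only commutativity of the $f_j$. Write $e_i\in\N^d$ for the $i$-th unit vector, so that $y=f_i(x)=\PhiA_{e_i}(x)$, and hence $\PhiA_c(y)=\PhiA_{c+e_i}(x)$ for every $c\in\N^d$.

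For $\Lambda(x)\subseteq\Lambda(y)$, take $a-b\in\Lambda(x)$ with $\PhiA_a(x)=\PhiA_b(x)$. Applying $f_i$ to both sides and using commutativity gives
\[
\PhiA_a(y)=f_i(\PhiA_a(x))=f_i(\PhiA_b(x))=\PhiA_b(y).
\]
So $a-b\in\Lambda(y)$.

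For $\Lambda(y)\subseteq\Lambda(x)$, take $a-b\in\Lambda(y)$ with $\PhiA_a(y)=\PhiA_b(y)$. Rewriting both sides via $y=\PhiA_{e_i}(x)$ gives $\PhiA_{a+e_i}(x)=\PhiA_{b+e_i}(x)$. Hence $(a+e_i)-(b+e_i)=a-b\in\Lambda(x)$. This direction is the only one with any content: it works because $\Lambda$ records differences of exponents rather than the exponents themselves, so the shift by $e_i$ cancels. I do not expect a real obstacle here. Note also that neither direction needs Lemma~\ref{lem:subgroup}, although that lemma confirms both sides are subgroups.

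For the consequence, recall that every edge of $G$ has the form $\{x,f_i(x)\}$ for some $i$, in one orientation or the other. The equality $\Lambda(f_i(x))=\Lambda(x)$ is symmetric, so $\Lambda$ is unchanged across every edge regardless of direction. Induction along a path then shows that $\Lambda$ is constant on each $G$-component. Fixed points, where $f_i(x)=x$, cause no trouble, since they give no actual edge and the identity holds there trivially.
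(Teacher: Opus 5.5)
Your proof is correct and matches the paper's argument essentially verbatim: you get $\Lambda(x)\subseteq\Lambda(y)$ by applying $f_i$ to $\PhiA_a(x)=\PhiA_b(x)$, and $\Lambda(y)\subseteq\Lambda(x)$ by rewriting $\PhiA_a(y)=\PhiA_b(y)$ as $\PhiA_{a+e_i}(x)=\PhiA_{b+e_i}(x)$, where the shift cancels in the difference. Your explicit induction along edges for the ``consequently'' clause fills in a step the paper leaves implicit.
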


\begin{proof}
The equality $\PhiA_a(x)=\PhiA_b(x)$ remains true after applying $f_i$, so
$\Lambda(x)\subseteq\Lambda(y)$.  Conversely, if
\[
\PhiA_a(y)=\PhiA_b(y),
\]
then
\[
\PhiA_{a+e_i}(x)=\PhiA_{b+e_i}(x),
\]
and the difference of the two vectors is again $a-b$.
Thus $\Lambda(y)\subseteq\Lambda(x)$.
\end{proof}

Hence, the sets
\[
X_H=\{x:\Lambda(x)=H\},
\qquad H\leq\Z^d,
\]
form a countable Borel disjoint partition.  The free part is
$X_{\{0\}}$, and the non-free part is the union of the $X_H$ with
$H\ne\{0\}$.

For a fixed $H$, the relation
\[
a\equiv_H b\quad\Longleftrightarrow\quad a-b\in H
\]
is a congruence on the free commutative monoid $\N^d$.

\begin{theorem}[R\'edei, see \cite{RedeiShort} and \cite{DiaconisSturmfels}]\label{lem:markov}
There is a finite set
\[
\mathcal M_H
=
\{(u_1,v_1),\ldots,(u_s,v_s)\}
\subseteq\N^d\times\N^d
\]
that generates $\equiv_H$ as a monoid congruence.  Explicitly, if
$a-b\in H$, then there is a finite sequence
\[
a=a_0,a_1,\ldots,a_\ell=b
\]
such that, for each $k$, there are $j\leq s$ and $w\in\N^d$ with
\[
\{a_k,a_{k+1}\}
=
\{w+u_j,w+v_j\}.
\]
\end{theorem}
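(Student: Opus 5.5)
We need Rédei's theorem: every congruence on $\N^d$ is finitely generated. Here the congruence is $\equiv_H$, and we need the stated explicit form. The plan below is self-contained and does not go through Hilbert's basis theorem for the ideal in $k[x_1,\ldots,x_d]$ generated by binomials $x^a-x^b$, $a-b\in H$. The statement has two parts: producing a finite generating set $\mathcal M_H$, and showing that the congruence generated by $\mathcal M_H$ is exactly described by the chains of elementary moves $\{w+u_j,w+v_j\}$.

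\textbf{Generating set.} Since $H\leq\Z^d$ is finitely generated, choose a $\Z$-basis $h_1,\ldots,h_t$ of $H$. Write each $h_k=h_k^+-h_k^-$ with $h_k^\pm\in\N^d$ the positive and negative parts. As a first candidate, put the pairs $(h_k^+,h_k^-)$ into $\mathcal M_H$; these are not enough in general, as the classical lattice-ideal versus saturation issue shows. Instead use Dickson's lemma. Consider the set
\[
\mathcal S=\{(a,b)\in\N^d\times\N^d:\ a-b\in H\}.
\]
It is a submonoid of $\N^{2d}$ and is the set of lattice points of a rational cone intersected with a lattice, so by Gordan's lemma it is finitely generated as a monoid. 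Let $\mathcal M_H$ be a finite monoid generating set of $\mathcal S$; more precisely, take the pairs that are minimal with respect to the coordinatewise order on $\N^{2d}$, which form a finite set by Dickson's lemma.

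\textbf{Chains between congruent elements.} Given $a-b\in H$, I would argue by induction on $|a|+|b|$ that $a$ and $b$ are connected by a chain of elementary moves. Suppose $(a,b)$ is not itself minimal. Then some minimal pair $(u,v)\in\mathcal M_H$ satisfies $(u,v)\leq(a,b)$. Set $w=a-u$ and $w'=b-v$, both in $\N^d$. The pair $(u,v)$ is in $\mathcal S$ by construction, so $u-v\in H$. Together with $a-b\in H$ this gives $w-w'\in H$. The single move $a=w+u\to w+v$ uses the pair $(u,v)$ with offset $w$. It remains to connect $w+v$ with $b=w'+v$, that is, to connect $w$ with $w'$ and then add $v$, since moves are translation invariant. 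Because $|w|+|w'|<|a|+|b|$ when $(u,v)\neq(0,0)$, induction connects $w$ to $w'$, and adding $v$ finishes the step. The base cases are the minimal pairs themselves, which are a single move with $w=0$, and the pair $(0,0)$, which is trivial.

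The main subtlety I expect is exactly this reduction. Dickson's lemma applied to the upward-closed set $\mathcal S\setminus\{(0,0)\}$ gives finitely many minimal elements, and every nonzero element dominates one of them. The bookkeeping of "subtract a minimal pair, recurse, translate back" is where care is needed: translation invariance of the moves must be checked, and termination requires a nonzero minimal pair, so minimal elements must be taken in $\mathcal S\setminus\{(0,0)\}$. The reverse inclusion is trivial, since each move preserves the class mod $H$ because $u_j-v_j\in H$. With this argument Gordan's lemma is not even needed; Dickson's lemma alone suffices.
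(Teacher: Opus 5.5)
Your argument is correct, but it takes a different route from the paper. The paper gives no proof of its own. It cites R\'edei's theorem in full generality (every congruence on a finitely generated commutative semigroup is finitely generated, via Grillet's short proof). Alternatively it cites Noetherianity of the polynomial ring applied to the binomial ideal $\langle x^a-x^b : a-b\in H\rangle$, together with the Diaconis--Sturmfels correspondence between binomial generators and Markov bases.

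You instead use the one special feature of $\equiv_H$ that matters: it is cancellative, being the kernel of $\N^d\to\Z^d/H$. The set $\mathcal S$ of congruent pairs is the intersection of $\N^{2d}$ with the subgroup $\{(a,b)\in\Z^{2d}:a-b\in H\}$. So whenever $(u,v)\leq(a,b)$ both lie in $\mathcal S$, also $(a-u,b-v)\in\mathcal S$; this is your step $w-w'\in H$. Hence the finitely many minimal elements of $\mathcal S\setminus\{(0,0)\}$ generate $\mathcal S$ as a monoid. Monoid generation then yields congruence generation by swapping one summand at a time, which is what your induction does.

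What this buys is a short, self-contained proof with an explicit basis: the pairs $(h^+,h^-)$ for $h$ in the Graver basis of $H$, plus harmless trivial pairs $(e_i,e_i)$ with $e_i\notin H$. That is all the paper needs. The cost is generality: the subtraction step fails for non-cancellative congruences, which is exactly where R\'edei's theorem or the Hilbert basis theorem does real work.

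One small correction: $\mathcal S\setminus\{(0,0)\}$ is not upward closed. If $(a,b)\in\mathcal S$, then $(a+e_1,b)\in\mathcal S$ only when $e_1\in H$. You do not need upward closure, though. Dickson's lemma bounds the number of minimal elements of any subset of $\N^{2d}$, and well-foundedness guarantees that every nonzero element of $\mathcal S$ dominates one of them.
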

R\'edei's theorem says that every congruence on a finitely generated
commutative semigroup is finitely generated, see \cite{RedeiShort}.  In the language of algebraic statistics,
$\mathcal M_H$ is a finite Markov basis for the lattice $H$; finite
existence also follows from the Hilbert basis theorem applied to the
corresponding binomial ideal, see \cite{DiaconisSturmfels}.

Fix such a basis. We define the \emph{core} by
\[
C_H
=
\{x\in X_H:\PhiA_{u_j}(x)=\PhiA_{v_j}(x)
\text{ for every }j\leq s\}.
\]

We will show the graph restricted on the core $C_H$ is of finite Borel asymptotic dimension and then it will give hyperfiniteness on $X_H$.

\begin{lemma}\label{lem:coreproperties}
The set $C_H$ is Borel and forward invariant.  For every $x\in C_H$
and every $a,b\in\N^d$,
\[
\PhiA_a(x)=\PhiA_b(x)
\quad\Longleftrightarrow\quad
a-b\in H.
\]
\end{lemma}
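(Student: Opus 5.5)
Borelness is immediate. $X_H=\{x:\Lambda(x)=H\}$ is Borel, since for each of the countably many pairs $(a,b)$ the condition $\PhiA_a(x)=\PhiA_b(x)$ is Borel, so both $\Lambda(x)\supseteq H$ and $\Lambda(x)\subseteq H$ are countable intersections of Borel conditions. $C_H$ is then $X_H$ intersected with the finitely many Borel sets $\{x:\PhiA_{u_j}(x)=\PhiA_{v_j}(x)\}$.

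For forward invariance, let $x\in C_H$ and $y=f_i(x)$. Lemma~\ref{lem:invariant} gives $\Lambda(y)=\Lambda(x)=H$, so $y\in X_H$. Applying $f_i$ to $\PhiA_{u_j}(x)=\PhiA_{v_j}(x)$ and using commutativity gives $\PhiA_{u_j}(y)=\PhiA_{v_j}(y)$ for every $j$. Hence $y\in C_H$.

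For the equivalence, take $x\in C_H$. The direction $(\Rightarrow)$ holds by the definition of $\Lambda(x)=H$: if $\PhiA_a(x)=\PhiA_b(x)$, then $a-b\in\Lambda(x)=H$. This direction uses only $x\in X_H$.

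For $(\Leftarrow)$, suppose $a-b\in H$. By Theorem~\ref{lem:markov} there is a chain $a=a_0,\ldots,a_\ell=b$ with $\{a_k,a_{k+1}\}=\{w+u_j,w+v_j\}$ for some $w\in\N^d$ and $j\leq s$. It suffices to show $\PhiA_{a_k}(x)=\PhiA_{a_{k+1}}(x)$ for each $k$, and then chain the equalities. For one step we have
\[
\PhiA_{w+u_j}(x)=\PhiA_w(\PhiA_{u_j}(x))=\PhiA_w(\PhiA_{v_j}(x))=\PhiA_{w+v_j}(x),
\]
using $\PhiA_{w+u}=\PhiA_w\circ\PhiA_u$, which holds by commutativity, and the core condition at $x$.

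There is no real obstacle. The only point to check is that Rédei's theorem is being applied to exactly the congruence $\equiv_H$: the core condition enforces the generating relations at $x$ itself, and compatibility with translation by $w$ comes for free because we apply $\PhiA_w$ after the relation.
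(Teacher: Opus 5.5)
Your proof is correct and follows the paper's argument. The forward direction uses only $\Lambda(x)=H$, and the reverse direction chains the core relations $\PhiA_{u_j}(x)=\PhiA_{v_j}(x)$, translated by $\PhiA_w$, along a R\'edei chain. You also spell out the Borelness and forward invariance that the paper calls obvious, including the use of Lemma~\ref{lem:invariant} to stay inside $X_H$.
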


\begin{proof}
Borelness and forward invariance are obvious.  If $a-b\in H$, use
the chain from Lemma \ref{lem:markov}. $\PhiA_{a_i}(x)=\PhiA_{a_{i+1}}(x)$, by commutativity and the definition of $C_H$.
Thus $\PhiA_a(x)=\PhiA_b(x)$.

Conversely, if $\PhiA_a(x)=\PhiA_b(x)$, then
$a-b\in\Lambda(x)=H$ because $x\in X_H$.
\end{proof}

Let
\[
q_H:\N^d\longrightarrow \Z^d/H
\]
be the quotient map $q_H(a)=[a]_H$ and put
$
P_H=q_H(\N^d).
$ It is a finitely generated cancellative commutative monoid.

The following proposition is the only use of Theorem 3.1.

\begin{proposition}[Finite-dimensional core]
\label{prop:coreasdim}
For every subgroup $H\leq\Z^d$,
\[
\asdimB(G\!\upharpoonright C_H)<\infty.
\]
\end{proposition}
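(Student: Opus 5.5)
The plan is to realize $G\upharpoonright C_H$ as the graph of a free action of the monoid $P_H$ and then apply Theorem~\ref{hyp:free}. For $x\in C_H$ and $p\in P_H$, pick any $a\in\N^d$ with $q_H(a)=p$ and set $p\cdot x=\PhiA_a(x)$. By Lemma~\ref{lem:coreproperties}, any two representatives $a,b$ of $p$ satisfy $a-b\in H$, hence $\PhiA_a(x)=\PhiA_b(x)$, so the definition is independent of the choice. Because $C_H$ is forward invariant, $p\cdot x\in C_H$. The action axioms follow from $\PhiA_{a+b}=\PhiA_a\PhiA_b$ and $q_H(a+b)=q_H(a)+q_H(b)$. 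The action is Borel: for each $p$, the map $x\mapsto p\cdot x$ equals the fixed Borel map $\PhiA_a$ for a chosen representative $a$.

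The hypotheses of Theorem~\ref{hyp:free} are then checked one by one.
\begin{enumerate}[label=(\roman*)]
\item $P_H$ is a submonoid of the group $\Z^d/H$, so it is cancellative, and it is generated by the finite set $S=\{q_H(e_1),\ldots,q_H(e_d)\}$.
\item Freeness: suppose $q_H(a)\cdot x=q_H(b)\cdot x$. Then $\PhiA_a(x)=\PhiA_b(x)$, and the converse direction of Lemma~\ref{lem:coreproperties} (which uses $\Lambda(x)=H$) gives $a-b\in H$, that is, $q_H(a)=q_H(b)$.
\item Each generator $q_H(e_i)$ acts as $f_i\upharpoonright C_H$, which is bounded-to-one because $f_i$ is.
\end{enumerate}

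One subtlety needs care. The theorem as stated speaks of a fixed generating set acting, whereas distinct $e_i$ may coincide in $P_H$, and some $q_H(e_i)$ may be $0$ when $e_i\in H$. In the latter case $f_i$ is the identity on $C_H$ and contributes only loops, which $G$ excludes. In the former case the two maps agree on $C_H$ and produce the same edges. Neither case changes the graph.

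Theorem~\ref{hyp:free} then shows that the graph on $C_H$ generated by $x\mapsto f_i(x)$, $i\le d$, has finite Borel asymptotic dimension. This graph is exactly $G\upharpoonright C_H$: its edges are the pairs $\{x,f_i(x)\}$ with $x\neq f_i(x)$ and both points in $C_H$. Forward invariance puts $f_i(x)$ in $C_H$ whenever $x\in C_H$. An edge of $G$ between two points of $C_H$ has the form $y=f_i(x)$ or $x=f_i(y)$ with both endpoints in $C_H$, so it also appears in the action graph.

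The step needing the most care is well-definedness together with freeness, and both hinge on Lemma~\ref{lem:coreproperties}. The remaining point to confirm is that the metric is the intrinsic path metric of $G\upharpoonright C_H$, which is what Theorem~\ref{hyp:free} computes. This coincides with the intended one, and Lemma~\ref{cor:intrinsic} shows it even agrees with $d_G$ on $C_H$.
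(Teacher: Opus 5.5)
Your proposal is correct and follows the paper's own argument. Like the paper, you make $P_H$ act on $C_H$ via Lemma~\ref{lem:coreproperties}, derive freeness from the converse direction of that lemma, note that the generators act by the bounded-to-one maps $f_i\upharpoonright C_H$, and apply Theorem~\ref{hyp:free}; your remarks on coinciding or trivial generators and on identifying the action graph with $G\upharpoonright C_H$ fill in details the paper leaves implicit.
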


\begin{proof}
Lemma \ref{lem:coreproperties} shows that the action on $C_H$ by $P_H$ given by $[a]_H\cdot x=\PhiA_{a}(x)$ does not depend on the choice of $a$.  It is free as a $P_H$ action, indeed, if
$q_H(a)\ne q_H(b)$, then $a-b\notin H$, so
$\PhiA_a(x)\ne\PhiA_b(x)$.  The generators $q_H(e_i)$ act by the
restrictions of the original bounded-to-one maps. Theorem 3.1 applies.
\end{proof}

Now we follow the idea of \cite[Lemma 8.3]{CJMST}.

\begin{lemma}\label{lem:shiftrelation}
Let $x\in X_H$, and let $u,v\in\N^d$ satisfy $u-v\in H$.  Then there
is $c\in\N^d$ such that
\[
\PhiA_{u+c}(x)=\PhiA_{v+c}(x).
\]
\end{lemma}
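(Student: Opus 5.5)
We use the definition of $\Lambda(x)$ together with Lemma~\ref{lem:subgroup}. Since $x\in X_H$, we have $\Lambda(x)=H$, so $u-v\in\Lambda(x)$. By the definition of $\Lambda(x)$, there are $a,b\in\N^d$ with
\[
\PhiA_a(x)=\PhiA_b(x)
\quad\text{and}\quad
a-b=u-v.
\]
The task is to turn this witness pair $(a,b)$ into one of the form $(u+c,v+c)$.

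From $a-b=u-v$ we get $a+v=b+u$ in $\N^d$. Put $c=a+v=b+u\in\N^d$; this choice keeps everything nonnegative. Apply $\PhiA_{u+v}$ to both sides of $\PhiA_a(x)=\PhiA_b(x)$, using commutativity as in \eqref{eq:commute}. This gives
\[
\PhiA_{a+u+v}(x)=\PhiA_{b+u+v}(x).
\]
Now rewrite the exponents. The left exponent is $a+v+u=c+u$, and the right exponent is $b+u+v=c+v$. Hence $\PhiA_{u+c}(x)=\PhiA_{v+c}(x)$, as required.

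The only point to check is that applying a common map $\PhiA_w$ preserves an equality $\PhiA_a(x)=\PhiA_b(x)$. This is immediate, because $\PhiA_w\circ\PhiA_a=\PhiA_{w+a}$ by commutativity of the $f_i$. I expect no real obstacle here; the one design decision is choosing $c=a+v$ rather than $a-u$, since $a-u$ could have negative coordinates.
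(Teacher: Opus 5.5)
Your proof is correct and is essentially the paper's argument. The paper sets $z=a-u=b-v$ and takes $c=z+r$ for any $r\in\N^d$ that makes $c$ nonnegative; your choice $c=a+v$ is exactly the instance $r=u+v$.
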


\begin{proof}
Since $u-v\in H=\Lambda(x)$, choose $a,b\in\N^d$ with
\[
a-b=u-v
\quad\text{and}\quad
\PhiA_a(x)=\PhiA_b(x).
\]
Let
$
z=a-u=b-v\in\Z^d.
$
Choose $r\in\N^d$ so that $z+r\in\N^d$ and put $c=z+r$.  Applying
$\PhiA_r$ to the equality $\PhiA_a(x)=\PhiA_b(x)$ gives
\[
\PhiA_{u+c}(x)=\PhiA_{v+c}(x).
\]
\end{proof}

\begin{proposition}\label{prop:reachcore}
$C_H$ is forward recurrent in $X_H$.
\end{proposition}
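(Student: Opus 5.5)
We must show that for every $x\in X_H$ there is $c\in\N^d$ with $\PhiA_c(x)\in C_H$. Since $\PhiA_c(x)$ is reached from $x$ by a directed path, this gives forward recurrence. Recall that $X_H$ is $G$-invariant by Lemma~\ref{lem:invariant}, so $\PhiA_c(x)\in X_H$ automatically. It therefore suffices to find a single $c$ with
\[
\PhiA_{u_j+c}(x)=\PhiA_{v_j+c}(x)\quad\text{for every }j\leq s,
\]
because this says exactly that $\PhiA_{u_j}(\PhiA_c(x))=\PhiA_{v_j}(\PhiA_c(x))$ for every $j$, i.e.\ $\PhiA_c(x)\in C_H$.

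The plan is to handle the finitely many Markov moves one at a time and then combine them. For each $j\leq s$ we have $u_j-v_j\in H$. Hence Lemma~\ref{lem:shiftrelation}, applied to $x\in X_H$, gives $c_j\in\N^d$ with
\[
\PhiA_{u_j+c_j}(x)=\PhiA_{v_j+c_j}(x).
\]
Put $c=c_1+\cdots+c_s$. For each $j$, apply $\PhiA_{c-c_j}$ to this equality; this is legitimate since $c-c_j\in\N^d$. By commutativity we obtain $\PhiA_{u_j+c}(x)=\PhiA_{v_j+c}(x)$ for every $j$ simultaneously. This is the key point: once a relation holds at some point of the forward orbit, it persists under further forward moves.

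Finally, $\PhiA_c(x)\in X_H$ and it satisfies all defining equations of $C_H$, so $\PhiA_c(x)\in C_H$, and $x$ reaches it by the directed path of length $\abs{c}$. There is no real obstacle here. The only point needing care is that the core condition is forward-hereditary (Lemma~\ref{lem:coreproperties}), which is what makes a common shift $c$ work, and that finiteness of $\mathcal M_H$ from Theorem~\ref{lem:markov} is what lets us take a finite sum. One might also want the bound on $\abs{c}$ to be Borel or uniform, but forward recurrence as defined only requires existence. A Borel choice, if needed later, is available by taking the lexicographically least $c$, since membership in $C_H$ is a Borel condition.
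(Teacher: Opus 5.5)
Your proof is correct and follows the paper's argument: apply Lemma~\ref{lem:shiftrelation} to each Markov pair, then push all the relations forward to a common shift $c$. The paper takes $c$ to be the coordinatewise maximum of the $c_j$ rather than their sum, which makes no difference since either way $c-c_j\in\N^d$, and you additionally make explicit the point, left implicit in the paper, that $\PhiA_c(x)\in X_H$ by Lemma~\ref{lem:invariant}.
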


\begin{proof}
For each Markov basis pair $(u_j,v_j)$, choose $c_j\in\mathbb N^d$ by
Lemma \ref{lem:shiftrelation}
\[
\PhiA_{u_j+c_j}(x)=\PhiA_{v_j+c_j}(x).
\]
Let $c$ be the coordinatewise maximum
of $c_1,\ldots,c_s$. After applying further forward functions, we have
\[
\PhiA_{u_j+c}(x)=\PhiA_{v_j+c}(x)
\]
for every $j$.  Hence $\PhiA_c(x)\in C_H$.
\end{proof}

For $m\geq0$, we define the bounded-depth layer
\[
Y_{H,m}
=
\bigl\{
x\in X_H:
\PhiA_c(x)\in C_H
\text{ for some }c\in\N^d,\ \abs{c}\leq m
\bigr\}.
\]

\begin{lemma}\label{lem:Yproperties}
The sets $Y_{H,m}$ are Borel and forward invariant along any $f_i$, and
\[
Y_{H,0}\subseteq Y_{H,1}\subseteq\cdots,
\qquad
X_H=\bigcup_m Y_{H,m}.
\]
\end{lemma}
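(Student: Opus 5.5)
We verify the four assertions of Lemma~\ref{lem:Yproperties} in turn, using only that $X_H$ and $C_H$ are Borel and forward invariant together with Proposition~\ref{prop:reachcore}.

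\emph{Borelness.} The map $x\mapsto\Lambda(x)$ is Borel in the sense that each set $\{x: a-b\in\Lambda(x)\}$ is Borel. Indeed, it is the countable union over pairs $(a',b')$ with $a'-b'=a-b$ of the Borel sets $\{x:\PhiA_{a'}(x)=\PhiA_{b'}(x)\}$. Since $H$ is finitely generated, and the condition $\Lambda(x)=H$ amounts to containing $H$'s generators and excluding the countably many elements outside $H$, each $X_H$ is Borel. Then
\[
Y_{H,m}=X_H\cap\bigcup_{\abs{c}\le m}\PhiA_c^{-1}(C_H)
\]
is a finite union of preimages of the Borel set $C_H$ under Borel maps, intersected with $X_H$, and is therefore Borel.

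\emph{Forward invariance.} Let $x\in Y_{H,m}$, witnessed by $c$ with $\abs{c}\le m$ and $\PhiA_c(x)\in C_H$, and put $y=f_i(x)$. By Lemma~\ref{lem:invariant}, $\Lambda(y)=\Lambda(x)=H$, so $y\in X_H$. By commutativity,
\[
\PhiA_c(y)=f_i(\PhiA_c(x)),
\]
and this lies in $C_H$ because $C_H$ is forward invariant (Lemma~\ref{lem:coreproperties}). The same $c$ therefore witnesses $y\in Y_{H,m}$.

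\emph{Monotonicity.} This is immediate: a witness $c$ with $\abs{c}\le m$ also satisfies $\abs{c}\le m+1$.

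\emph{Exhaustion.} By Proposition~\ref{prop:reachcore}, every $x\in X_H$ has some $c\in\N^d$ with $\PhiA_c(x)\in C_H$. Taking $m=\abs{c}$ gives $x\in Y_{H,m}$, so $X_H=\bigcup_m Y_{H,m}$.

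None of these steps poses a real obstacle. The only point requiring care is the Borelness of $X_H$, which has just been verified from the definition of $\Lambda$ and could alternatively be taken as implicit in the paper's earlier assertion that the sets $X_H$ form a Borel partition.
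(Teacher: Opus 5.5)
Your proof is correct and follows the paper's argument: forward invariance comes from $\PhiA_c(f_i(x))=f_i(\PhiA_c(x))\in C_H$, and exhaustion comes from Proposition~\ref{prop:reachcore}. You also spell out Borelness, monotonicity, and the check that $f_i(x)\in X_H$ via Lemma~\ref{lem:invariant}, all of which the paper leaves implicit.
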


\begin{proof}
If $\PhiA_c(x)\in C_H$, then
\[
\PhiA_c(f_i(x))=f_i(\PhiA_c(x))\in C_H
\]
because $C_H$ is forward invariant.  Thus $Y_{H,m}$ is forward
invariant.  The exhaustion follows from
Proposition \ref{prop:reachcore}.
\end{proof}

For $x\in Y_{H,m}$, let $c_m(x)$ be the lexicographically least
$c\in\N^d$ with $\abs{c}\leq m$ and $\PhiA_c(x)\in C_H$, and put
\[
\pi_m(x)=\PhiA_{c_m(x)}(x).
\]
Then $\pi_m:Y_{H,m}\to C_H$ is Borel and
\[
d_G(x,\pi_m(x))\leq m.
\]

\begin{lemma}\label{lem:retraction}
If $x,y\in Y_{H,m}$, then
\[
d_{G\upharpoonright C_H}(\pi_m(x),\pi_m(y))
\leq
(2m+1)d_{G\upharpoonright Y_{H,m}}(x,y).
\]
\end{lemma}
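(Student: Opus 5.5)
By the triangle inequality, it suffices to treat the case where $x$ and $y$ are adjacent in $G\upharpoonright Y_{H,m}$. Then $d_{G\upharpoonright Y_{H,m}}(x,y)=1$, and the claim becomes
\[
d_{G\upharpoonright C_H}(\pi_m(x),\pi_m(y))\leq 2m+1 .
\]
Indeed, given a path $x=x_0,x_1,\ldots,x_n=y$ inside $Y_{H,m}$, every $x_k$ lies in the domain of $\pi_m$. Summing the one-step bounds along the path gives the factor $(2m+1)n$.

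Since $C_H$ is forward invariant, Lemma~\ref{cor:intrinsic} gives
\[
d_{G\upharpoonright C_H}(\pi_m(x),\pi_m(y))=d_G(\pi_m(x),\pi_m(y)).
\]
So it is enough to bound the $G$-distance. Without loss of generality $y=f_i(x)$. Write $c=c_m(x)$ and $c'=c_m(y)$, both with $\abs{c},\abs{c'}\leq m$. Then
\[
\pi_m(x)=\PhiA_c(x),\qquad \pi_m(y)=\PhiA_{c'}(f_i(x))=\PhiA_{c'+e_i}(x).
\]
The key step is to find a common future of these two points of small total length. I would apply $\PhiA_{c'+e_i}$ to $\pi_m(x)$ and $\PhiA_{c}$ to $\pi_m(y)$. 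Both give $\PhiA_{c+c'+e_i}(x)$, so by Lemma~\ref{lem:common}
\[
d_G(\pi_m(x),\pi_m(y))\leq \abs{c'+e_i}+\abs{c}\leq 2m+1,
\]
as required.

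The only point needing care is the reduction to adjacent pairs. The path realizing $d_{G\upharpoonright Y_{H,m}}$ stays in $Y_{H,m}$ by definition, so $\pi_m$ is defined at every vertex of the path, and no further argument is needed. It is also harmless if $f_i(x)=x$, since then $x=y$ is not an edge. I therefore expect no real obstacle. The main thing to double-check is that applying Lemma~\ref{cor:intrinsic} to $C_H$ is legitimate, which holds because $C_H$ is forward invariant by Lemma~\ref{lem:coreproperties}.
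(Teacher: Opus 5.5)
Your proof is correct and follows the paper's argument. You reduce to adjacent pairs along a path in $Y_{H,m}$, bound $d_G(\pi_m(x),\pi_m(y))\leq 2m+1$, and transfer to $G\upharpoonright C_H$ via Lemma~\ref{cor:intrinsic}; the only cosmetic difference is that the paper uses the concatenated path $\pi_m(x)\to x\to y\to\pi_m(y)$, whereas your explicit common future already gives a path inside $C_H$.
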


\begin{proof}
We prove by induction on $d_{G\upharpoonright Y_{H,m}}(x,y)$. First suppose $x$ and $y$ are adjacent. There is a path in $G$
\[
\pi_m(x)--- x- y---\pi_m(y)
\]
of length at most $2m+1$, and its endpoints lie in the forward-invariant
set $C_H$. By Lemma \ref{cor:intrinsic}, the distance in $G$ is the same as in $C_H$.  

Now we can finish the lemma by induction along a path in
$Y_{H,m}$.
\end{proof}

\begin{proposition}\label{prop:Yasdim}
For every $H\leq\Z^d$ and every $m$,
\[
\asdimB(G\!\upharpoonright Y_{H,m})<\infty.
\]
\end{proposition}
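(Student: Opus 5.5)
We transfer the finite Borel asymptotic dimension of $G\upharpoonright C_H$ (Proposition~\ref{prop:coreasdim}) to $G\upharpoonright Y_{H,m}$ through the Borel retraction $\pi_m\colon Y_{H,m}\to C_H$. Write $\rho$ for the path metric of $G\upharpoonright Y_{H,m}$ and $\sigma$ for that of $G\upharpoonright C_H$. By Lemma~\ref{cor:intrinsic}, both $Y_{H,m}$ and $C_H$ are forward invariant, so $\rho$ and $\sigma$ are restrictions of $d_G$. Suppose $\asdimB(\sigma)\le n$ and fix $r>0$. Set $s=(2m+1)r$ and take a Borel equivalence relation $F_0$ on $C_H$ whose classes have $\sigma$-diameter at most $D$, such that every $\sigma$-ball of radius $s$ meets at most $n+1$ classes. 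Pull it back: $x\,F\,y$ iff $\pi_m(x)\,F_0\,\pi_m(y)$. This $F$ is Borel because $\pi_m$ is Borel.

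Next we check the two conditions. For the diameter bound, suppose $x\,F\,y$. Then $\pi_m(x)$ and $\pi_m(y)$ lie in one $F_0$-class, so $\sigma(\pi_m x,\pi_m y)\le D$. Hence $\rho(x,y)\le m+D+m$, using $d_G(x,\pi_m x)\le m$ and the fact that $\rho=d_G$ on $Y_{H,m}$. For the multiplicity bound, let $y\in B_\rho(x,r)$. By Lemma~\ref{lem:retraction}, $\sigma(\pi_m x,\pi_m y)\le (2m+1)r=s$. So $\pi_m$ maps $B_\rho(x,r)$ into $B_\sigma(\pi_m x,s)$, which meets at most $n+1$ classes of $F_0$. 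Therefore $B_\rho(x,r)$ meets at most $n+1$ classes of $F$, and $\asdimB(G\upharpoonright Y_{H,m})\le n$.

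The main point to verify is Lemma~\ref{lem:retraction} when $x$ and $y$ lie in the same $Y_{H,m}$-component but the $\sigma$-bound must hold uniformly. The induction there is along a $\rho$-geodesic: every intermediate vertex lies in $Y_{H,m}$, so $\pi_m$ is defined on it, and each edge contributes at most $2m+1$ to $\sigma$ by Lemma~\ref{cor:intrinsic} applied to $C_H$. We also need $\rho$ to be a Borel extended metric with countable finite-distance relation. This holds because $G$ is locally finite, the maps being bounded-to-one. With these checks, the pullback argument gives the proposition directly, with the same dimension bound as the core.
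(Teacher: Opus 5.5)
Your proposal is correct and is essentially the paper's own proof: you pull back a witness on $C_H$ at scale $(2m+1)r$ along $\pi_m$, bound the multiplicity with Lemma~\ref{lem:retraction}, and bound diameters by $m+D+m$ using that $d_{G\upharpoonright Y_{H,m}}=d_G$. One small citation slip: forward invariance of $Y_{H,m}$ and $C_H$ comes from Lemmas~\ref{lem:Yproperties} and~\ref{lem:coreproperties}, and Lemma~\ref{cor:intrinsic} is what turns that invariance into equality of the metrics.
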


\begin{proof}
Fix $r>0$.  Choose on $C_H$ a uniformly bounded Borel equivalence
relation $E$ witnessing finite Borel asymptotic dimension at scale
$(2m+1)r$. We define an equivalence relation $\widetilde E$:
\[
x\,\widetilde E\,y
\quad\Longleftrightarrow\quad
\pi_m(x)\,E\,\pi_m(y).
\]
Lemma \ref{lem:retraction} shows that an $r$-ball in $Y_{H,m}$ maps
into a $(2m+1)r$-ball in $C_H$, so it meets no more
$\widetilde E$-classes than a $(2m+1)r$-ball meets
$E$-classes.

If the $E$-classes have $C_H$-diameter at most $S$, then a
$\widetilde E$-class has $G$-diameter at most
$
m+S+m.
$ And $G$-distance is the same as $d_{G\upharpoonright Y_{H,m}}$-distance, so $\widetilde E$ is uniformly bounded.
Thus $\widetilde E$ witnesses finite Borel asymptotic dimension.
\end{proof}

The rest of the proof is routine. By the increasing union theorem \ref{thm:union}, we have $G\upharpoonright X_H$ is hyperfinite, and $G\upharpoonright X_H$ is a countable disjoint union of $G$, so $G$ is hyperfinite.

\begin{acknowledgments}
    The author would like to thank Wei Dai and Cecelia Higgins for drawing attention to this question. The author would like to thank Petr Naryshkin for many helpful discussions.
\end{acknowledgments}


\begin{thebibliography}{99}

\bibitem{OpenProblems}
G.~Barmpalias, N.~Bazhenov, C.~T. Chong, W.~Dai, S.~Gao,
J.~L. Goh, J.~He, K.~M.~S. Ng, A.~Nies, T.~Slaman,
R.~Thornton, W.~Wang, J.~Yu, and L.~Yu,
\newblock Open problems in computability theory and descriptive set theory,
\newblock preprint (2025).

\bibitem{BY}
A.~Bernshteyn and J.~Yu,
\newblock Large-scale geometry of Borel graphs of polynomial growth,
\newblock \emph{Advances in Mathematics} \textbf{473} (2025), 110290.

\bibitem{CJMST}
C.~Conley, S.~Jackson, A.~Marks, B.~Seward and R.~Tucker-Drob,
\emph{Borel asymptotic dimension and hyperfinite equivalence relations},
Duke Math. J. \textbf{172} (2023), no.~16, 3175--3226.

\bibitem{DiaconisSturmfels}
P.~Diaconis and B.~Sturmfels,
\newblock Algebraic algorithms for sampling from conditional distributions,
\newblock \emph{Ann. Statist.} \textbf{26} (1998), no.~1, 363--397.

\bibitem{GJ2015}
S. Gao and S. Jackson,
\emph{Countable abelian group actions and hyperfinite equivalence relations},
Invent. Math. \textbf{201} (2015), no. 1, 309--383.

\bibitem{GH}
J.~Greb\'ik and C.~Higgins,
\newblock Complexity of finite Borel asymptotic dimension,
\newblock \emph{Forum Math. Sigma} \textbf{14} (2026), e31.

\bibitem{RedeiShort}
P.~Grillet,
\newblock A short proof of R\'edei's theorem,
\newblock \emph{Semigroup Forum} \textbf{46} (1993), 126--127.

\bibitem{Higgins} C. Higgins, \textit{A note on forward-recurrent sets with bounded gaps}, manuscript available at {\tt https://sites.google.com/view/cecelia-higgins/home}, 2023.

\bibitem{KST1999}
A. Kechris, S. Solecki and S. Todorcevic,
\emph{Borel chromatic numbers},
Adv. Math. \textbf{141} (1999), no. 1, 1--44.

\bibitem{NSWY}
P.~Naryshkin, F.~Shinko, F.~Weilacher and J.~Yu
\newblock Hyperfiniteness of bounded-to-one actions of commutative monoids, in preparation

\bibitem{SchneiderSeward2024}
S.~Schneider and B.~Seward,
\newblock Locally nilpotent groups and hyperfinite equivalence relations,
\newblock \emph{Mathematical Research Letters}
  \textbf{31} (2024), no.~2, 511--578.


\end{thebibliography}
\end{document}